\newtheorem{theorem}{Theorem}%[section]
\newtheorem{lemma}{Lemma}[theorem]
\newtheorem{corollary}{Corollary}[theorem]%[theorem]
\newtheorem{prop}{Proposition}%[theorem]
\crefname{lemma}{lemma}{lemmas}
\crefname{prop}{proposition}{propositions}
\theoremstyle{remark}
\newtheorem*{remark}{Remark}
\newcommand{\lbstoc}[0]{\underline{B}}
\newcommand{\abs}[1]{\ensuremath{\left\lvert#1\right\rvert}}
\renewcommand{\P}[1]{\ensuremath{\mathbb{P}(#1)}}
\newcommand{\bracket}[1]{\ensuremath{\left(#1\right)}}
\newcommand{\expect}[2][]{\ensuremath{\mathbb{E}_{#1}\left[#2\right]}}
\newcommand{\expcond}[2]{\ensuremath{\mathbb{E}\left[#1\middle\rvert #2\right]}}
\newcommand{\ball}[2]{\ensuremath{B(#1,#2)}}
\newcommand{\proj}[1]{\ensuremath{\text{proj}_{#1}}}
\newcommand{\pderiv}[2]{\ensuremath{\frac{\partial{#1}}{\partial{#2}}}}
\newcommand{\Cmik}[0]{\ensuremath{C}} %mikami's cost
\newcommand{\lip}[1]{\text{Lip}(#1)} 
\newcommand{\dbound}[0]{C^\infty_\text{db}} 
\newcommand{\convex}[0]{c} 
\newcommand{\flip}[1]{\hat{#1}} 
\DeclareMathOperator*{\argmin}{arg\,min}
\title{On Optimal Stochastic Ballistic Transports}
\author{Alistair Barton \and Nassif Ghoussoub}
\thanks{This work is part of a Master's thesis prepared by A. Barton under the supervision of N. Ghoussoub. Both authors were partially supported by  the 
Natural Sciences and Engineering Research Council of Canada (NSERC)}
\address{
Mathematics Department\\
University of British Columbia\\
Vancouver, BC}
\email{arsbar@math.ubc.ca, nassif@math.ubc.ca}
\keywords{Optimal Transportation, Stochastic Control}
\subjclass{Primary 49L02, Secondary 93E02}
\begin{document}
% abstract
%\RaggedRight
\begin{abstract}
For a given Lagrangian $L:[0,T]\times M\times M^\ast\rightarrow \mathbb{R}_+$ and probability measures $\mu\in\mathcal{P}(M^\ast)$, $\nu\in \mathcal{P}(M)$, we introduce the stochastic ballistic transportation problems 
\begin{align}\tag{$\star$}
    \underline{B}(\mu,\nu):=\inf\left\{\mathbb{E}\left[\langle V,X_0\rangle +\int_0^T L(t,X,\beta(t,X))\,dt\right]\middle\rvert V\sim\mu,X_T\sim \nu\right\}\\\tag{$\star\star$}
    \overline{B}(\nu,\mu):=\sup\left\{\mathbb{E}\left[\langle V,X_T\rangle -\int_0^T L(t,X,\beta(t,X))\,dt\right]\middle\rvert V\sim\mu,X_0\sim \nu\right\}
\end{align}
where $X$ is a diffusion process with drift $\beta$. This cost is based on the stochastic optimal transport problem presented by Mikami and the deterministic ballistic transport introduced by Ghoussoub. We obtain a Kantorovich-style duality result that reformulates this problem in terms of solutions to the Hamilton-Jacobi-Bellman equation
\begin{equation*}
    \frac{\partial\phi}{\partial t}+\frac{1}{2}\Delta \phi+H(t,x,\nabla\phi)=0,
\end{equation*}
and show how optimal processes may be thereby attained.
\end{abstract}
\maketitle

\section{Introduction \& Related works}
%The problem of optimal transportation was first conceived of by the French mathematician Gaspard Monge \cite{Monge} as the optimal way to fill an assortment of holes from piles of soil.

 In its modern incarnation, the problem of optimal transportation conceived by Monge \cite{Monge}  is formulated as follows: let $X,Y$ be measure spaces with a cost function $c:X\times Y\rightarrow \mathbb{R}$. For two probability measures $\mu\in\mathcal{P}(X)$, $\nu\in\mathcal{P}(Y)$ the minimum cost of transportation between them is then
\begin{equation}\label{eq:OTclassic}
    C(\mu,\nu)=\inf\left\{\int_{X\times Y} c(x,y)\,d\pi(x,y)\middle\rvert \pi\in\mathcal{T}(\mu,\nu)\right\},
\end{equation}
where $\mathcal{T}(\mu,\nu)$ is the set of admissable \emph{transport plans} for $(\mu,\nu)$. A transport plan $\pi$ is admissable if it is a probability measure on $X\times Y$ with first marginal $(\proj{1})_\#\pi=\mu$ and second marginal $(\proj{2})_\#\pi=\nu$.
%---this may be thought of as identifying how much mass is transported from sets in $\sigma(X)$ to sets in $\sigma(Y)$.
\par 
A well known cost on $\mathbb{R}^n\times \mathbb{R}^n$ is the Wasserstein cost $c(x,y):=\langle x,y\rangle$ studied by Brenier \cite{Brenier}, whose associated transportation costs are
\begin{align}\tag{$\underline{W}$}
    \underline{W}(\mu,\nu):=&\inf\left\{\int_{\mathbb{R}^n\times \mathbb{R}^n} \langle x,y\rangle \,d\pi(x,y)\middle\rvert \pi\in\mathcal{T}(\mu,\nu)\right\},\\
    \tag{$\overline{W}$}
    \overline{W}(\mu,\nu):=&\sup\left\{\int_{\mathbb{R}^n\times \mathbb{R}^n} \langle x,y\rangle \,d\pi(x,y)\middle\rvert \pi\in\mathcal{T}(\mu,\nu)\right\}.
\end{align}
These problems are clearly related by noting that $\underline{W}(\mu,\nu)=-\overline{W}(\flip{\mu},\nu)$ where $\flip{\mu}$ is the reflection of $\mu$ (i.e.,  $\flip{\mu}(A):=\mu(-A)$). Probabilistically, we can rewrite these as:
\begin{align*}
    \underline{W}(\mu,\nu):=&\inf_{\pi}\left\{\expect{ \langle X,Y\rangle }\middle\rvert X\sim \mu,Y\sim\nu\right\},\\
    \overline{W}(\mu,\nu):=&\sup_\pi\left\{\expect{ \langle X,Y\rangle }\middle\rvert \rvert X\sim \mu,Y\sim\nu\right\},
\end{align*}
respectively. Note that, in general, $(X,Y)$ are not independent. Indeed Brenier showed that in the case where $\mu,\nu$ are absolutely continuous with respect to the Lebesgue measure then, in the optimal case for $\underline{W}$, $Y$ is completely determined by $X$ (in particular, there exists a convex function $\phi$ such that $Y=\nabla\phi(X)$) \cite{Brenier}.
\par 
The \emph{dual} formulation, discovered by Kantorovich  \cite{kantorovich}, allows us to equate the \emph{primal} problem considered in \cref{eq:OTclassic} to a problem on the space of functions:
\begin{equation}\label{eq:gendual}
    C(\mu,\nu)=\sup\left\{\int_X f(x)\,d\mu(x)+\int_Y g(y)\,d\nu(y)\middle\rvert (f,g)\in\mathcal{K}(c)\right\}.
\end{equation}
Here the set of \emph{Kantorovich potentials} $\mathcal{K}(c)$ is the set of functions $(f,g)\in L^1(d\mu)\times L^1(d\nu)$ such that $f(x)+g(y)\le c(x,y)$. It is clear that 
%By maximizing the $f(x)$ meeting this constraint for a given $g(y)$ and vice versa, 
the Kantorovich potentials may be assumed to satisfy
\begin{align*}
    f(x)=&\inf_y\{c(x,y)-g(y)\}& 
    g(y)=&\inf_x\{c(x,y)-f(x)\}.
\end{align*}
For Wasserstein costs, this amounts to $f$ and $g$ being Legendre duals of each other: $g(y)=\tilde{f}(y)$ and $f(x)=\tilde{g}(x)$ for $\underline{W}$, and $g(y)=f^{\ast}(y)$ and $f(x)=g^\ast(x)$ for $\overline{W}$. Here $\tilde{f}$ (tesp., $f^*$) denote the concave (resp., convex) Legendre transform of $f$.
\par 
In this paper, we will investigate the ballistic stochastic dynamic transportation problem. This is a variant of the transportation problem developed by combining the ballistic and stochastic dynamic problems that we will briefly summarize for contextual purposes.
\par
Bernard and Buffoni \cite{BernardBuffoni} introduced the idea of a dynamic cost function derived from the idea of minimizing the total action associated with a Lagrangian $L:[0,T]\times TM\rightarrow\mathbb{R}_+$ in the transportation between two measures $\nu_0,\nu_T$ on a manifold $M$ (here $TM$ is the tangent bundle on the manifold $M$). This was motivated to connect Mather theory with optimal transport. Here the cost $c_s^t(x,y)$ associated with the transportation problem denoted $C_s^t(\nu_s,\nu_t)$ is given by
\begin{equation}\label{eq:dyncost}
    c_s^t(x,y):=\inf\left\{\int_s^t L(t,\gamma(t),\dot{\gamma}(t))\,dt\middle\rvert \gamma\in C^1([s,t],M), \gamma(s)=x, \gamma(t)=y\right\},
\end{equation}
where $s<t$ and $(s,t)\in [0,T]^2$. 
They show \cite[Proposition 17]{BernardBuffoni} that this problem has a dual formulation akin to \cref{eq:gendual}, where the Kantorovich potentials may be restricted to the set $\{(u(x,T),-u(x,0))\rvert u\in HJ\}$ where $HJ$ is the set of continuous viscosity solutions of the Hamilton-Jacobi equation
\begin{equation}\label{eq:HJ}\tag{HJ}
    \pderiv{u}{t}+H(t,x,\nabla u)= 0,
\end{equation}
and $H(t,x,p):=\sup_v\{\langle p,v\rangle -L(t,x,v)\}$ is the hamiltonian associated with $L$. They further showed \cite[Theorem B]{BernardBuffoni} that, in the case where the measures are absolutely continuous, the optimal transportation measure can be described as the Hamiltonian flow of an initial momentum measure.
% C_0^T(\nu_0,\nu_T)=\int c_0^T(x,\proj{x}\circ\psi_0^T\circ S(x))\,d\nu_0(x).
%Here $S:M\rightarrow TM$ associates each location with an initial velocity, and $\psi_s^t:TM\rightarrow TM$ is the Euler-Lagrange flow, and $\proj{x}:TM\rightarrow M$ is the canonical projection. The Euler-Lagrange flow is related to the Hamiltonian flow $\varphi_s^t:T^\ast M\rightarrow T^\ast M$ (which satifies $\partial_t\psi_s^t=(\partial_p H,\partial_x H)$ for any $t\in]s,T]$) via the diffeomorphism $\nabla_p H(x,p,t): T^\ast M\times [0,T]\rightarrow TM\times [0,T]$.
\par 
The ballistic cost was recently introduced by Ghoussoub \cite{Ghoussoub}. This is a cost on $M^\ast\times M$ (where $M=\mathbb{R}^n$) taking the form:
\begin{equation}%\label{eq:dyncost}
    b_s^t(v,x):=\inf\left\{\langle v, \gamma (s)\rangle +\int_s^t L(t,\gamma(t),\dot{\gamma}(t))\,dt\middle\rvert \gamma\in C^1([s,t],M), \gamma(t)=x\right\},
\end{equation}
%\begin{equation*}
%    b(v,x)=\inf_{y\in M}\{\langle v,y\rangle +c_0^T(y,x)\},
%\end{equation*}
%where $c_0^T$ is Bernard and Buffoni's dynamic cost (\ref{eq:dyncost}). 
This was used to lift the Hopf-Lax formula to Wasserstein space. Ghoussoub also  showed that this cost corresponds to the dual problem
\begin{equation*}
    \underline{B}_c(\mu_0,\nu_T)=\sup\left\{\int_M V(T,x)\,d\nu_T+\int_{M^*}\tilde{V}_0(v)\,d\mu_0\middle\rvert V\in HJ_c\right\}
\end{equation*}
where $HJ_c$ is the set of solutions to the Hamilton-Jacobi equation (\ref{eq:HJ}) with initial condition $u(x,0)$ concave. Here $\tilde{V}_0(v):=\inf_{x\in M}\{\langle v,x\rangle +V_0(x)\}$ is the concave legendre transform. This result was obtained through the following interpolation result
\begin{equation*}
    \underline{B}_c(\mu_0,\nu_T)=\inf\left\{\underline{W}(\mu_0,\nu_0)+C_c(\nu_0,\nu_T)\middle\rvert \nu_0\in\mathcal{P}(M)\right\}.
\end{equation*}
This is akin to the Hopf-Lax formula on Wasserstein space for the functional $\mathcal{U}^{\mu_0}:\rho\mapsto \underline{W}(\mu_0,\rho)$ under the Lagrangian
\begin{equation*}
    \mathcal{L}(t,\rho,v)=\int_M L(t,x,v(x))\,d\rho(x)
\end{equation*}
where $v$ is a vector field related to $\rho$ via the continuity equation $\frac{d\rho}{dt}+{\rm div} (\rho v)=0$. (See Villani
%, which may be seen as tangent to $\rho$ in the Otto calculus 
\cite[Chapter 8]{villani}).
\par 
The following stochastic variant of the action based transportation cost was considered by Mikami and Thieullen \cite{mikami}. %for applications to stochastic control theory. 
%where they consider the cost
\begin{equation}\label{eq:stoc}
    C_S^\epsilon(\nu_0,\nu_T):=\inf\left\{\expect{\int_0^T L(t,X,\beta_X(t,X))\,dt}\middle\rvert X\in \mathcal{A}_{\nu_0}^{\nu_T}(\epsilon),X_T\sim\nu_T\right\}.
\end{equation}
The set of \emph{transportation processes} $\mathcal{A}_{\nu_0}^{\nu_T}(\epsilon)$ may be understood as the set of stochastic processes solving the stochastic differential equation $dX=\beta_X(t,X)\,dt+\epsilon dW_t$ for some $\beta_X(t,X)$, where $W_t$ is $\sigma(X_s:0\le s\le t)$-Brownian motion, initially distributed according to the measure $\nu_0$ and distributed at time $T$ according to $\nu_T$. An earlier result from Mikami \cite{Mikamilsc} shows that the deterministic case considered by Brenier \cite{Brenier}, i.e., when $L(x, v)=\frac{1}{2}|v|^2$, %Bernard and Buffoni (\ref{eq:dyncost}) 
is recovered from the corresponding stochastic transport as one takes $\epsilon\rightarrow 0$.
% --at least when $L(x, v

This problem cannot be formulated in the same way as deterministic transportation problems (i.e., \cref{eq:OTclassic})---indeed, the corresponding cost function could be $c(x,y)=C(\delta_x,\delta_y)=\infty$ everywhere even under reasonable assumptions on the Lagrangian.
Therefore, classical Monge-Kantorovich duality need not apply here. 
% it cannot have a dual formulation arising from a classical formulation in the sense of Kantorovich. 
Nevertheless, Mikami and Thieullen \cite[Theorem 2.1]{mikami} derived the following dual formulation through Legendre transforms considerations: %ations of \cref{eq:stoc}:
\begin{equation*}
    C(\nu_0,\nu_T):=\sup\left\{\int_M \phi(T,x)\,d\nu_T(x)-\int_M \phi(0,x)\,d\nu_0(x)\middle\rvert \phi\in HJB\right\}.
\end{equation*}
Here, $HJB$ is the set of classical solutions to the following Hamilton-Jacobi-Bellman equation
\begin{equation}\tag{HJB}
    \pderiv{\phi}{t}+\frac{\epsilon}{2}\Delta \phi+H(t,x,\nabla \phi)=0,
\end{equation}
with final condition smooth and bounded. Furthermore for any optimal process $X$ there is a sequence of solutions $\phi_n$ to (\ref{eq:HJB}) such that the SDE \begin{equation*}
    dX=\lim_{n\rightarrow\infty}\nabla_p H(t,X,\nabla\phi_n(t,x))dt+\epsilon dW_t
\end{equation*}is satisfied---this is the counterpart to the Hamiltonian flow for the stochastic cost.
\par 
In this paper we aim to combine the results of Ghoussoub, and Mikami and Thieullen, by considering the stochastic ballistic transportation problem:
\begin{equation*}
    \underline{B}(\mu_0,\nu_T):=\inf\left\{\expect{\langle V,X(0)\rangle +\int_0^T L(t,X,\beta_X(t,X))\,dt}\middle\rvert V\sim\mu_0, X(\cdot)\in \mathcal{A}^{\nu_T}\right\}.
\end{equation*}
This may be considered in either the framework of stochastic control theory, or a stochastic version of the Hopf-Lax equation in measure space.

\section{Assumptions \& Main Results}
\subsection{Assumptions}
We will operate on the following assumptions on the Lagrangian, that reduce to those considered by Mikami and Thieullen in \cite{mikami}:
\begin{enumerate}[label=(A\arabic*),start=0]
    \item $(t,x,v)\mapsto L(t,x,v)$ is $\mathcal{C}^3$, always positive, and has $\nabla_v^2L(t,x,v)>0$. 
    \item There exists $\delta>1$ such that 
    \begin{equation*}
        \liminf_{\abs{u}\rightarrow\infty}\frac{\inf_{t,x} L(t,x,u)}{\abs{u}^\delta}>0.
    \end{equation*}
    (In \cref{prop:A1convex} we show that this is equivalent $L(t,x,v)$ being bound below by a convex function $\ell(v)\in\Omega(\abs{v}^\delta)$).\\
    \item[]
    \begin{equation}\tag{A2}\hspace{1mm}
        \Delta L(\epsilon_1,\epsilon_2):=\sup\left\{\frac{1+L(t,x,u)}{1+L(s,y,u)}-1\middle\rvert\abs{t-s}<\epsilon_1,\abs{x-y}<\epsilon_2\right\}\overset{\epsilon_1,\epsilon_2\rightarrow0}{\longrightarrow} 0.
    \end{equation}
    \stepcounter{enumi}
    \item
    \begin{enumerate}[label=(\roman*),start=1]
     \item $\sup_{t,x} L(t,x,0)<\infty$.
     \item $\abs{\nabla_x L(t,x,v)}/(1+L(t,x,v))$ is bounded.
     \item $\sup\left\{\abs{\nabla_v L(t,x,u)}: \abs{u}\le R\right\}<\infty$ for all $R$.
    \end{enumerate}
    \item (i) $\Delta L(0,\infty)<\infty$ or (ii) $\delta =2$ in (A1).
\end{enumerate}
We give a brief summary of the role of the above assumptions:
\\
(A1) is a coercivity result that is necessary for sequential compactness of minimizing transportation processes (\cref{prop:Ccoerc}).\\ 
(A2) is used to show that the expected action of a transportation process is lower semi-continuous \cite[eqs. (3.17),(3.38)-(3.41)]{Mikamilsc}.\\ 
(A3) is used with (A0) to derive the Hamilton-Jacobi-Bellman equation in \cref{prop:HJB} \cite[p. 210, Remark 11.2]{HJB}.\\ 
(A4,i) allows us to uniformly bound the ratio $\frac{1+L(t,x,u)}{1+L(t,y,u)}$, while (A4,ii) ensures the minimizing $\beta_X$ satisfies $\int_0^T \abs{\beta_X(t,X)}^2\,dt<\infty$ a.s., permitting us to assume the process $X(\cdot)$ is absolutely continuous with respect to $W(\cdot)$ \cite[Theorem 7.16]{LipsterStatofRP}. Either of these results with (A0)-(A3) is sufficient to show convexity of $(\nu_0,\nu_T)\mapsto C(\nu_0,\nu_T)$ \cite[Lemma 3.2]{mikami}.
\begin{remark}\label{prop:A1convex}
(A1) is equivalent to $L(t,x,v)$ being bound below by a convex function $\ell(v)$ such that 
\begin{equation*}
    \liminf_{\abs{v}\rightarrow\infty} \frac{\ell(v)}{\abs{v}^\delta}>0.
\end{equation*}
Indeed, if $L$ is bound by such an $\ell$, it is simple to see (A1) holds. We prove the reverse direction by construction. By (A1), we may assume that there exists a $U\in\mathbb{R}$ such that for all $\abs{u}>U$
\begin{equation*}
        \frac{\inf_{t,x} L(t,x,u)}{\abs{u}^\delta}>\alpha,\qquad 
        (\alpha>0,\delta>1),
\end{equation*}
defining $\alpha,\delta$. Then the convex function $\ell:M^\ast\rightarrow \mathbb{R}$ defined by
\begin{equation*}
    \ell(v):=\begin{cases}0& \abs{v}<U\\ 
    \alpha\abs{\abs{v}-U}^\delta& \abs{v}\ge U
    \end{cases}%\inf_{\abs{u}\ge v}\inf_{t,x} L(t,x,u)
\end{equation*}
is a lower bound on $L(t,x,v)$ and is asymptotically bounded below by $\abs{v}^\delta$.
\end{remark}
\subsection{Notation}
The space of probability measures on a space $X$  will be indicated by $\mathcal{P}(X)$, while the subset of measures with finite barycenter will be denoted $\mathcal{P}_1(X):=\{\mu\in\mathcal{P}(X): \int \abs{x}\,d\mu<\infty\}$. We will work on the space $M:=\mathbb{R}^d$ however we will still refer to the dual space $M^\ast$ explicitly, for clarity. Measures in $\mathcal{P}(M)$ will be designated by $\nu$, while $\mu$ will designate measures in $\mathcal{P}(M^\ast)$.
\par Given a Lagrangian $L:[0,T]\times M\times M^\ast\rightarrow\mathbb{R}$ satisfying properties (A), we define the (stochastic) dynamic and ballistic variants of the optimal transportation problem mentioned in the introduction:
\begin{align*}
    C(\nu_0,\nu_T):=&\inf\left\{ \expect{\int_0^T L(t,X(t),\beta_X(t,X(t)))\,dt}\middle\rvert X(\cdot)\in \mathcal{A}_{\nu_0}^{\nu_T}\right\},\\
    \lbstoc(\mu_0,\nu_T):=&\inf\left\{\expect{\langle Y,X(0)\rangle+ \int_0^T L(t,X(t),\beta_X(t,X(t)))\,dt}\middle\rvert Y\sim \mu_0,X(\cdot)\in \mathcal{A}^{\nu_T}\right\}.
\end{align*}
Here $\mathcal{A}_{\nu_0}^{\nu_T}$ refers to the set of $\mathbb{R}^d$-valued continuous semimartingales $X(\cdot)$---initially distributed according to $\nu_0$ and finally distributed according to $\nu_T$---such that there exists a measurable drift $\beta_X:[0,T]\times C([0,T])\rightarrow M^\ast$ where
\begin{enumerate}[label=(\roman*)]
    \item $\omega\mapsto\beta_X(t,\omega)$ is $\mathcal{B}(C([0,t]))_+$-measurable for all $t$.
    \item $W(t):=X(t)-X(0)-\int_0^t \beta_X(s,X)\,ds$ is a $\sigma[X(s):s\in[0,t]]$-Brownian motion.
\end{enumerate}
$\mathcal{A}^{\nu_T}$ refers to the same processes, except with initial distribution unconstrained, likewise $\mathcal{A}_{\nu_0}$ with the final distribution unconstrained and $\mathcal{A}$ with initial and final distribution unconstrained.

For convenience, we define the expected action of stochastic processes $X(\cdot)\in\mathcal{A}$:
\begin{equation*}
    \mathscr{A}(X):=\expect{\int_0^T L(t,X(t),\beta_X(t,X(t)))\,dt}.
\end{equation*}
We will also consider another, analogous, ballistic stochastic cost defined by:
\begin{equation*}
    \overline{B}(\nu_0,\mu_T):=\sup\left\{\expect{\langle V,X(T)\rangle- \int_0^T L(t,X(t),\beta_X(t,X(t)))\,dt}\middle\rvert V\sim\mu_T,X(\cdot)\in \mathcal{A}_{\nu_0}\right\},
\end{equation*}
which we will refer to as the \emph{maximizing ballistic cost}---in comparison, $\underline{B}$ may be referred to as the \emph{minimizing ballistic cost}.\par
We recall the convex and concave Legendre transforms, which we will denote
\begin{align*}
    f^\ast(v):=&\sup_{x\in M}\{\langle v,x\rangle -f(x)\},\\ 
    \tilde{f}(v):=&\inf_{x\in M}\{\langle v,x\rangle-f(x)\}=-(-f)^\ast(-v),
\end{align*}
respectively, where $v$ is in the dual space $M^\ast$. This notation will also be used for the legendre duals of functions of measures, such as $C_{\nu_0}:\nu\mapsto C(\nu_0,\nu)$ where the dual space is a linear subspace of continuous functions.
\par 
A set of functions that is of particular importance in this regard is the set of smooth functions with bound derivatives---notably, this may be realized as the set of mollified Lipschitz functions---which we will denote $\dbound$, and the subset of convex such functions $\convex\dbound$.
\subsection{Main Results}
We present an interpolation result that allows us to interpret the ballistic cost in terms of the Wasserstein cost and the stochastic action cost.
\begin{theorem}[Interpolation of $\underline{B}$]
If $L$ satisfies the assumptions (A), then our stochastic ballistic cost may be written as:
\begin{equation*}
    \lbstoc(\mu_0,\nu_T)=\inf_\nu\{\underline{W}(\mu_0,\nu)+C(\nu,\nu_T)\}.%=\inf_\nu\{\underline{W}(\mu_0,\nu)+\tilde{C}(\nu,\nu_T)\}.
\end{equation*}
This minimum is attained in the case where $\mu_0\in\mathcal{P}_1(M^\ast)$ and $\nu_T\in\mathcal{P}_1(M)$.
\end{theorem}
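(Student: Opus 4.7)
The plan is to establish the interpolation identity by proving two matching inequalities and then to obtain attainment by a compactness and lower-semicontinuity argument.

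For the inequality $\lbstoc(\mu_0,\nu_T) \le \inf_\nu\{\underline{W}(\mu_0,\nu)+C(\nu,\nu_T)\}$, I would argue by gluing. Fix $\nu \in \mathcal{P}(M)$ and $\eta > 0$; pick a near-optimal Wasserstein plan $\pi_\eta \in \mathcal{T}(\mu_0,\nu)$ and a near-optimal transportation process $Y_\eta \in \mathcal{A}_\nu^{\nu_T}$. On a sufficiently rich probability space, I would realize $(V,X(0))$ with joint law $\pi_\eta$, and then---driven by a Brownian motion independent of $V$---run the diffusion $X(\cdot)$ started from $X(0)$ with drift $\beta_{Y_\eta}$, so that conditionally on $X(0)$ the law of $X(\cdot)$ matches that of $Y_\eta$ and in particular $X(T) \sim \nu_T$. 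The admissibility check is that since the driving Brownian motion is independent of $V$, the innovation process remains Brownian in the enlarged filtration $\sigma(V, X_s : s \le t)$, placing $X(\cdot) \in \mathcal{A}^{\nu_T}$. Summing the two parts of the cost would give $\lbstoc(\mu_0,\nu_T) \le \underline{W}(\mu_0,\nu) + C(\nu,\nu_T) + 2\eta$, after which $\eta \to 0$ and infimizing over $\nu$ finishes the direction.

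The reverse inequality is a direct projection. For any competitor $V \sim \mu_0$ and $X(\cdot) \in \mathcal{A}^{\nu_T}$, set $\nu_X := \mathrm{Law}(X(0))$; then $(V,X(0))$ is a coupling of $(\mu_0,\nu_X)$, so $\expect{\langle V,X(0)\rangle} \ge \underline{W}(\mu_0,\nu_X)$, while $X \in \mathcal{A}_{\nu_X}^{\nu_T}$ forces $\mathscr{A}(X) \ge C(\nu_X,\nu_T)$. Summing and taking the infimum over $(V,X)$ delivers the $\ge$ direction, with the outer infimum over $\nu_X$ already built in on the right-hand side.

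For attainment when $\mu_0 \in \mathcal{P}_1(M^\ast)$ and $\nu_T \in \mathcal{P}_1(M)$, I would take a minimizing sequence $\nu_n$ for the right-hand side. Boundedness of $C(\nu_n,\nu_T)$ combined with the coercivity (A1) controls the $\delta$-th moments of the underlying trajectories and hence of $\nu_n$, producing tightness via Prokhorov. Lower semicontinuity of $\underline{W}(\mu_0,\cdot)$ on $\mathcal{P}_1(M)$ is classical, and lower semicontinuity of $C(\cdot,\nu_T)$ follows from (A2) as in the work of Mikami-Thieullen; together they show that any weak limit $\nu_\ast$ attains the infimum. I expect the hardest step to be the gluing in the upper bound: one must produce a measurable drift for the composite process and verify the Brownian-motion condition in the enlarged filtration, since the drift of $Y_\eta$ is only specified on its native probability space, and the $V$-coordinate must be stitched in without disturbing the semimartingale decomposition. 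A secondary technical point for attainment is that $\underline{W}(\mu_0,\nu_n)$ is not sign-definite and must be balanced against the nonnegative coercive term $C(\nu_n,\nu_T)$; this can be handled by a Young-type splitting of moments between $\mu_0$ and $\nu_n$ so that the whole sum remains bounded below and forbids escape of mass to infinity.
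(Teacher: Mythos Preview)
Your proposal is correct and follows essentially the same route as the paper: both prove the two inequalities by projecting any admissible $(V,X)$ onto $\nu_X = \mathrm{Law}(X(0))$ in one direction and by gluing a near-optimal Wasserstein coupling to a near-optimal process (the paper does this via disintegration of the coupling along its $M$-marginal) in the other, and both obtain attainment via tightness plus the lower semicontinuity of $C$ from Mikami--Thieullen. The only difference is one of emphasis: you flag the gluing as the hardest step and treat tightness as secondary, whereas the paper handles the gluing in a line and instead invests its effort in a dedicated coercivity result (\cref{prop:Ccoerc}) showing that $\{\nu : C(\nu,\nu_T) \le N\int|x|\,d\nu + \text{const}\}$ is tight, which it then combines with the elementary bound $\underline{W}(\mu_0,\nu) \ge -\bigl(\int|x|\,d\mu_0\bigr)\int|y|\,d\nu$ rather than a Young-type splitting.
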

This leads to our main result, the dual of the stochastic ballistic cost:

\begin{theorem}[Dual of $\underline{B}$]
If $\nu_T\in\mathcal{P}_1(M)$ and $\mu_0$ with compact support are such that $\lbstoc(\mu_0,\nu_T)<\infty$, and the Lagrangian satisfies (A0)-(A4) then we have
\begin{equation}
    \lbstoc(\mu_0,\nu_T)=\sup_{-f\in \convex\lip{M}}\left\{\int_M f(x)\, d\nu_T(x)+\int_{M^*} \widetilde{\phi^f}(0,v)\,d\mu_0(v)\right\},
\end{equation}
where $\widetilde{g}$ denotes the concave legendre transform of $g$ and $\phi^{f}$ is the solution to the Hamilton-Jacobi-Bellman equation
\begin{align}\label{eq:HJB}\tag{HJB}
    \pderiv{\phi}{t}+\frac{1}{2}\Delta\phi(t,x)+H(t,x,\nabla\phi)=&0, & \phi(T,x)=&f(x).
\end{align}
\end{theorem}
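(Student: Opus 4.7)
The plan is a two-sided bound: $\sup\le\lbstoc$ via a pathwise Itô computation against the HJB solution, and $\lbstoc\le\sup$ via the interpolation theorem just proved combined with the Kantorovich and Mikami-Thieullen dualities.

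For $\sup\le\lbstoc$, fix $f$ with $-f\in\convex\lip{M}$ and let $\phi^f$ solve (HJB) backward from the terminal $f$. For any admissible $(V,X)\in\mathcal{A}^{\nu_T}$ with $V\sim\mu_0$ and $X(T)\sim\nu_T$, I would apply Itô's formula to $\phi^f(t,X(t))$; substituting the HJB equation for $\partial_t\phi^f+\tfrac{1}{2}\Delta\phi^f$ and using the Fenchel inequality $H(t,X,\nabla\phi^f)\ge\langle\nabla\phi^f,\beta_X\rangle-L(t,X,\beta_X)$ gives the pathwise bound $d\phi^f(t,X)\le L(t,X,\beta_X)\,dt+\nabla\phi^f\cdot dW$. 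Taking expectation (the martingale has zero mean under the integrability guaranteed by (A1) and (A4), modulo a localization argument), adding $\expect{\langle V,X(0)\rangle}$ to both sides, and invoking the pointwise bound $\langle v,x\rangle-\phi^f(0,x)\ge\widetilde{\phi^f}(0,v)$ yields
\[
\int_M f\,d\nu_T+\int_{M^\ast}\widetilde{\phi^f}(0,v)\,d\mu_0(v)\le \expect{\langle V,X(0)\rangle+\int_0^T L(t,X,\beta_X)\,dt}.
\]
Infimizing in $(V,X)$ and supremizing in $f$ delivers the first inequality.

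For the reverse inequality, the interpolation theorem writes $\lbstoc(\mu_0,\nu_T)=\inf_\nu\{\underline{W}(\mu_0,\nu)+C(\nu,\nu_T)\}$, attained at some $\nu^\ast\in\mathcal{P}_1(M)$. Brenier-Kantorovich duality at $\nu^\ast$ supplies a concave Lipschitz potential $h^\ast$ attaining $\underline{W}(\mu_0,\nu^\ast)=\int\widetilde{h^\ast}\,d\mu_0+\int h^\ast\,d\nu^\ast$, while Mikami-Thieullen duality provides a sequence $\phi^n\in HJB$ with $\int\phi^n(T,\cdot)\,d\nu_T-\int\phi^n(0,\cdot)\,d\nu^\ast\to C(\nu^\ast,\nu_T)$. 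First-order optimality of $\nu^\ast$ in the inf-convolution---read cleanly as a Fenchel-Rockafellar duality for the sum $\underline{W}(\mu_0,\cdot)+C(\cdot,\nu_T)$ on $\mathcal{P}_1(M)$---forces the subgradient $h^\ast$ of $\underline{W}(\mu_0,\cdot)$ to align with $\phi^n(0,\cdot)$ in the $\nu^\ast$-sense, so that upon writing $f^n=\phi^n(T,\cdot)$ the $\nu^\ast$-integrals cancel in the limit, giving $\lbstoc=\lim_n\{\int_M f^n\,d\nu_T+\int_{M^\ast}\widetilde{\phi^{f^n}}(0,v)\,d\mu_0(v)\}\le\sup$.

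The principal obstacle is executing this potential-matching rigorously while keeping the terminal data in the admissible class $-f^n\in\convex\lip{M}$. The Brenier potential $h^\ast$ is concave Lipschitz by Wasserstein theory, but a generic Mikami-Thieullen sequence need not have concave terminals, and it is unclear that HJB propagates concavity backward through the Laplacian term. I expect this to demand a regularization: mollify $h^\ast$ to a smooth concave $h^\varepsilon$, construct terminal data $f^\varepsilon$ via a dual stochastic-control characterization of the HJB value function so that $\phi^{f^\varepsilon}(0,\cdot)$ approximates $h^\varepsilon$, and then pass $\varepsilon\to 0$ using the compact support of $\mu_0$ and the $\mathcal{P}_1$ regularity of $\nu_T$ to secure dominated convergence of both integrals.
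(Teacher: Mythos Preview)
Your easy direction ($\sup\le\lbstoc$) via It\^o plus the Fenchel--Young inequality is fine; the paper obtains the same bound more implicitly, by restricting the supremum in the Fenchel--Moreau biconjugate $\lbstoc_{\mu_0}=\lbstoc_{\mu_0}^{\ast\ast}$ to the subclass $\dbound$.

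The hard direction, however, has a genuine gap, and your own diagnosis of the obstacle is accurate but your proposed fix does not work. Two issues:
\begin{enumerate}
\item The potential-matching step is not as clean as you suggest. Even if first-order optimality at $\nu^\ast$ forces $h^\ast$ and $\phi^n(0,\cdot)$ to agree \emph{in $L^1(\nu^\ast)$}, the quantity you need to control is $\int_{M^\ast}\widetilde{\phi^n(0,\cdot)}\,d\mu_0$, and the concave Legendre transform is a global operation on $M$: agreement on $\mathrm{supp}\,\nu^\ast$ does not transfer to agreement of the transforms on $\mathrm{supp}\,\mu_0$.
\item Your regularization proposal asks to construct terminal data $f^\varepsilon$ so that the HJB solution satisfies $\phi^{f^\varepsilon}(0,\cdot)\approx h^\varepsilon$ for a prescribed concave $h^\varepsilon$. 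This amounts to running the HJB equation \emph{forward} in time from $t=0$, i.e.\ solving a backward parabolic equation in the wrong direction, which is ill-posed. There is no reason such $f^\varepsilon$ exists in $\dbound$, let alone with $-f^\varepsilon$ convex.
\end{enumerate}

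The paper sidesteps all of this by never fixing $\nu^\ast$ and never matching potentials. Instead it computes the conjugate $\lbstoc_{\mu_0}^\ast(f)$ directly as a \emph{chain} of Legendre transforms: using the interpolation formula,
\[
\lbstoc_{\mu_0}^\ast(f)=\sup_{\nu_T,\nu}\Bigl\{\int f\,d\nu_T-C(\nu,\nu_T)-\underline{W}(\mu_0,\nu)\Bigr\},
\]
and then performing the supremum over $\nu_T$ first (which for $f\in\dbound$ gives $\int\phi^f(0,\cdot)\,d\nu$ by the stochastic dynamic-programming characterization of the HJB value function), followed by the supremum over $\nu$ (which gives $\underline{W}_{\mu_0}^\ast(\phi^f(0,\cdot))=-\int\widetilde{\phi^f}(0,\cdot)\,d\mu_0$ by the Kantorovich dual). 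This yields the desired formula for every $f\in\dbound$ at once, with no need to locate the optimal $\nu^\ast$ or align subgradients. The remaining issue---that Fenchel--Moreau ranges over all of $\lip{M}$ while the formula is only established on $\dbound$---is handled by a mollification argument in the spirit of Mikami--Thieullen, using (A2) to control the error; this is a routine density step, not an inversion of the HJB flow.
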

This formulation of the problem allows us to write optimal processes in terms of a maximizing sequence of $\phi$:

\begin{corollary}[Optimal Processes for $\underline{B}$]
Under the assumptions of Theorem 2, with $d\mu_0\ll d\lambda$, we have that $(V,X(t))$ is a minimizing process for $\underline{B}(\mu_0,\nu_T)$ if and only if it is a solution to the SDE
\begin{align*}
    dX =& \nabla_p\lim_{n\rightarrow\infty} H(t,X,\nabla\phi_n(t,X))\, dt + dW_t\\
    X(0) =& \nabla\lim_{n\rightarrow\infty}\bracket{\widetilde{\phi_n^0}}(V),
\end{align*}
where $\phi_n$ is a sequence of solutions to (\ref{eq:HJB}) that is maximizing for the dual problem. (Here we use the notation $\phi^t_n(x):=\phi_n(t,x)$).
\end{corollary}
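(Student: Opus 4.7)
The strategy is to combine the Interpolation Theorem with the variational characterizations of the two component subproblems: Brenier's theorem for the Wasserstein coupling $\underline{W}(\mu_0, \nu_0^*)$ at time $0$, and the Mikami-Thieullen characterization (recalled in the introduction) for the stochastic dynamic problem $C(\nu_0^*, \nu_T)$. Theorem 2 will then serve as the glue, guaranteeing that a single maximizing sequence $\{\phi_n\}$ of HJB solutions plays both roles simultaneously.

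For the only-if direction, I would start with an optimal process $(V, X(\cdot))$ and let $\nu_0^* := \text{Law}(X(0))$. The Interpolation Theorem gives
\begin{equation*}
\underline{B}(\mu_0, \nu_T) = \underline{W}(\mu_0, \nu_0^*) + C(\nu_0^*, \nu_T),
\end{equation*}
so $(V, X(0))$ is optimal for the Wasserstein subproblem and $X(\cdot)$ starting from $\nu_0^*$ is optimal for the dynamic subproblem. Since $\mu_0 \ll d\lambda$, Brenier's theorem provides a concave $g$ with $X(0) = \nabla \widetilde{g}(V)$ almost surely, and Mikami-Thieullen provides a sequence $\psi_m$ of HJB solutions with $dX = \lim_m \nabla_p H(t, X, \nabla \psi_m) \, dt + dW_t$. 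To identify these with any maximizing sequence $\{\phi_n\}$ for the dual of $\underline{B}$ from Theorem 2, I would insert $\pm \int \phi_n(0,x)\, d\nu_0^*(x)$ and combine the Mikami-Thieullen duality inequality with the Kantorovich inequality $\widetilde{\phi_n^0}(v) + \phi_n(0,x) \leq \langle v, x \rangle$ to obtain
\begin{equation*}
\int \phi_n(T,x) \, d\nu_T(x) + \int \widetilde{\phi_n^0}(v) \, d\mu_0(v) \leq \underline{W}(\mu_0, \nu_0^*) + C(\nu_0^*, \nu_T) = \underline{B}(\mu_0, \nu_T).
\end{equation*}
Because the left side converges to $\underline{B}(\mu_0, \nu_T)$ by maximality, both inequalities must saturate asymptotically, so $\{\phi_n\}$ is simultaneously a maximizing sequence for the two component duals at $\nu_0^*$. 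Identifying $\{\phi_n\}$ with both $\{\psi_m\}$ and with the Kantorovich potential then produces the two lines of the claimed SDE.

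The if direction follows by reversing the calculation: given a process obeying the SDE, It\^o's formula applied to $\phi_n(t, X(t))$ shows that its $\underline{B}$-cost coincides asymptotically with the dual value of $\{\phi_n\}$, forcing optimality. The main obstacle I anticipate is the passage to the limit inside the gradients. The drift term $\lim_n \nabla_p H(t, X, \nabla \phi_n)$ is inherited from the Mikami-Thieullen argument, which depends on the integrability of $\beta_X$ and the uniform HJB regularity provided by (A4). For the initial condition, the existence of $\nabla \lim_n \widetilde{\phi_n^0}(V)$ at $\mu_0$-a.e.\ $v$ will rely on the fact that pointwise limits of concave functions are concave and therefore differentiable Lebesgue-a.e.; the assumption $\mu_0 \ll d\lambda$ then transfers this to $\mu_0$-a.e.\ differentiability, giving a well-defined transport map $V \mapsto X(0)$.
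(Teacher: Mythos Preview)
Your saturation argument---splitting the dual--primal equality by inserting $\pm\int\phi_n(0,x)\,d\nu_0^*$, bounding each piece by the corresponding component of the cost, and forcing asymptotic equality---is exactly the paper's mechanism. The paper executes it directly via It\^o's formula applied to $\phi_n(t,X(t))$ (giving a three-term split, with an explicit concave-hull correction $\phi_n^0-\widetilde{\widetilde{\phi_n^0}}\le 0$ that isolates the concavity of the limit) and never routes through the Interpolation Theorem or invokes Brenier and Mikami--Thieullen as external results; your preliminary detour producing $g$ and $\psi_m$ is thus unnecessary, since the saturation step you already describe delivers both lines of the SDE on its own.
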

\subsubsection{Maximizing Cost}
We obtain parallel results for the maximizing cost $\overline{B}$:
\begin{theorem}[Interpolation of $\overline{B}$]
If $L$ matches assumptions (A), then
\begin{equation*}
    \overline{B}(\nu_0,\mu_T)=\sup\{\overline{W}(\nu,\mu_T)-C(\nu_0,\nu)\},
\end{equation*}
where, again, the maximum is attained when $\nu_0\in\mathcal{P}_1(M)$ and $\mu_T\in\mathcal{P}_1(M^\ast)$.
\end{theorem}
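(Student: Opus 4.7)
The plan is to mirror Theorem 1 with the roles of the initial and terminal data exchanged and infima/suprema reversed. The core observation is that decomposing the supremum in $\overline{B}(\nu_0,\mu_T)$ according to the terminal law $\nu:=(X(T))_\#\mathbb{P}$ of the diffusion cleanly separates the Wasserstein pairing of $V$ with $X(T)$ from the dynamic action accumulated on $[0,T]$. To get the upper bound ($\le$), I would fix admissible $V\sim\mu_T$ and $X\in\mathcal{A}_{\nu_0}$ and set $\nu:=(X(T))_\#\mathbb{P}$. The joint law of $(X(T),V)$ is a coupling of $(\nu,\mu_T)$, hence $\expect{\langle V,X(T)\rangle}\le\overline{W}(\nu,\mu_T)$, while $X\in\mathcal{A}_{\nu_0}^\nu$ gives $\mathscr{A}(X)\ge C(\nu_0,\nu)$. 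Subtracting and taking suprema first over $(V,X)$ and then over $\nu$ yields the claimed upper bound.

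For the lower bound ($\ge$) I would use a gluing construction. Fix $\nu\in\mathcal{P}(M)$ and $\epsilon>0$, pick $X_\epsilon\in\mathcal{A}_{\nu_0}^\nu$ with $\mathscr{A}(X_\epsilon)\le C(\nu_0,\nu)+\epsilon$, and select $\pi_\epsilon\in\mathcal{T}(\nu,\mu_T)$ with $\int\langle x,v\rangle\,d\pi_\epsilon\ge\overline{W}(\nu,\mu_T)-\epsilon$. Disintegrating $\pi_\epsilon(dx,dv)=\nu(dx)K_\epsilon(x,dv)$, I would enlarge the probability space carrying $X_\epsilon$ by an independent uniform variable and use it to sample $V_\epsilon$ with $V_\epsilon\mid X_\epsilon(T)=x\sim K_\epsilon(x,\cdot)$. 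Then $V_\epsilon\sim\mu_T$, the joint law of $(X_\epsilon(T),V_\epsilon)$ equals $\pi_\epsilon$, and because $V_\epsilon$ is independent of the Brownian filtration driving $X_\epsilon$, the process $X_\epsilon$ stays in $\mathcal{A}_{\nu_0}$. This gives $\overline{B}(\nu_0,\mu_T)\ge\overline{W}(\nu,\mu_T)-C(\nu_0,\nu)-2\epsilon$, and letting $\epsilon\to 0$ then taking $\sup_\nu$ closes the equality.

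For attainment under $\nu_0\in\mathcal{P}_1(M)$ and $\mu_T\in\mathcal{P}_1(M^*)$, I would take a maximizing sequence $\{\nu_n\}$. Only $\nu_n$ keeping $\int|x|^\delta\,d\nu_n$ bounded can be relevant, since the coercivity in (A1) forces $C(\nu_0,\nu_n)\to\infty$ otherwise, while on this domain $\overline{W}(\nu_n,\mu_T)$ admits an upper bound of strictly subdominant growth via a H\"older/Young pairing against the first moment of $\mu_T$. Tightness of $\{\nu_n\}$ then follows, and passing to a weak limit $\nu^*$ I would combine the lower semicontinuity of $\nu\mapsto C(\nu_0,\nu)$ from Mikami--Thieullen with the upper semicontinuity of $\nu\mapsto\overline{W}(\nu,\mu_T)$ to conclude that $\nu^*$ realises the supremum. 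The main obstacle will be this compactness step: calibrating the coercivity/growth comparison precisely enough that the $\overline{W}$ term can be dominated along the maximizing sequence in a topology for which both summands are simultaneously semicontinuous. Everything else is essentially formal once the Wasserstein/dynamic decomposition is in hand.
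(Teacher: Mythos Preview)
Your proposal is correct and follows essentially the same two-inequality scheme as the paper: decompose according to the terminal law $\nu$ of $X$, bound each piece by $\overline{W}$ and $C$ respectively, and glue near-optimal couplings and processes via disintegration for the reverse inequality. For attainment the paper invokes its earlier coercivity result (Proposition~\ref{prop:Ccoerc}) directly to get tightness from $\overline{W}(\nu_n,\mu_T)\le M\int|x|\,d\nu_n$ with $M=\int|v|\,d\mu_T$, which is exactly the linear-vs-superlinear comparison you sketch; you are right that the semicontinuity step needed to pass to the limit is the only genuinely delicate point, and the paper is in fact terser about it than you are.
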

\begin{theorem}[Dual of $\overline{B}$]
Assume the Lagrangian satisfies the assumptions (A). If $\nu_0\in\mathcal{P}_1(M)$, $\mu_T$ has compact support, and $\overline{B}(\nu_0,\mu_T)<\infty$, then 
\begin{equation*}
    \overline{B}(\nu_0,\mu_T)=\inf_{g\in\convex \dbound}\left\{\int_{M^*} g^\ast\,d\mu_T+\int_M\phi^{g}\,d\nu_0\right\},
\end{equation*}
where $\phi^g$ solves the Hamilton-Jacobi-Bellman equation
\begin{align}\tag{HJB2}
    \pderiv{\phi}{t}+\frac{1}{2}\Delta \phi -H(t,x,\nabla\phi)=&0&\phi(x,T)=g(x)
\end{align}
\end{theorem}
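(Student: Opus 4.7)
The plan is to mirror the proof of Theorem 2, combining the interpolation identity of Theorem 4 with Kantorovich duality for $\overline{W}$ and the Mikami--Thieullen duality for $C$. Theorem 4 reduces the claim to matching
\[
    \overline{B}(\nu_0,\mu_T) \;=\; \sup_{\nu\in \mathcal{P}(M)}\{\overline{W}(\nu,\mu_T) - C(\nu_0,\nu)\}
\]
against the proposed dual; since $\nu_0\in \mathcal{P}_1(M)$ and $\mu_T$ has compact support (so $\mu_T\in \mathcal{P}_1(M^\ast)$), Theorem 4 further guarantees that the supremum is attained at some $\nu^{\ast}\in \mathcal{P}_1(M)$.

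For the ``$\le$'' direction I would fix $g\in \convex\dbound$ and use the Kantorovich inequality $\langle v,x\rangle \le g(x)+g^{\ast}(v)$ to get $\overline{W}(\nu,\mu_T)\le \int g\,d\nu+\int g^{\ast}\,d\mu_T$ for every $\nu$. Substituting and taking the supremum in $\nu$ yields
\[
    \overline{B}(\nu_0,\mu_T) \;\le\; \int g^{\ast}\,d\mu_T \;+\; C^{\ast}_{\nu_0}(g),
\]
where $C^{\ast}_{\nu_0}$ is the convex conjugate of the convex, lower-semicontinuous functional $\nu\mapsto C(\nu_0,\nu)$ (convexity and lsc coming from \cite[Lemma~3.2]{mikami}). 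The Mikami--Thieullen dual, together with the Fenchel--Moreau theorem, identifies $C^{\ast}_{\nu_0}(g)$ with $\int \phi^{g}(0,x)\,d\nu_0(x)$, where $\phi^{g}$ solves (HJB2) with terminal data $g$; the apparent sign flip between (HJB) and (HJB2) is precisely what one obtains by conjugating $-C$ instead of $C$. Taking the infimum over $g$ delivers one inequality.

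For the reverse inequality I would evaluate the interpolation at $\nu^{\ast}$. Brenier's theorem applied to $\overline{W}(\nu^{\ast},\mu_T)$ produces a convex Kantorovich potential $g_0$ with $\overline{W}(\nu^{\ast},\mu_T)=\int g_0\,d\nu^{\ast}+\int g_0^{\ast}\,d\mu_T$. A mollification and truncation step yields a sequence $g_n\in\convex\dbound$ approximating $g_0$; standard parabolic theory then produces classical solutions $\phi^{g_n}$ of (HJB2), and passing to the limit (using (A1), (A3), (A4) to control the relevant integrals) gives
\[
    \int g_n^{\ast}\,d\mu_T \;+\; \int \phi^{g_n}(0,x)\,d\nu_0(x) \;\longrightarrow\; \overline{W}(\nu^{\ast},\mu_T) - C(\nu_0,\nu^{\ast}) \;=\; \overline{B}(\nu_0,\mu_T),
\]
so the infimum over the approximating family matches the dual and closes the argument.

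The main obstacle will be this regularity upgrade: the Brenier potential $g_0$ is only convex and locally Lipschitz, whereas (HJB2) requires terminal data in $\convex\dbound$ to admit a classical solution and to apply Mikami--Thieullen directly. The delicate point is to choose the approximants $g_n\to g_0$ so that the initial traces $\phi^{g_n}(0,\cdot)$ converge against $\nu_0$ to the trace associated with $g_0$, while simultaneously $g_n^{\ast}\to g_0^{\ast}$ strongly enough to pass to the limit against $\mu_T$. This is where the coercivity (A1) and the uniform growth controls in (A3)--(A4) do the real work, exactly in parallel with the treatment of Theorem 2.
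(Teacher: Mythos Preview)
Your ``$\le$'' direction is fine and essentially matches what the paper does (the paper phrases it as the trivial direction of a Fenchel--Moreau biconjugate, but it is the same inequality). The gap is in your ``$\ge$'' direction.

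You choose $\nu^{\ast}$ optimal in the interpolation and then take $g_0$ to be a Kantorovich potential for $\overline{W}(\nu^{\ast},\mu_T)$, so that $\overline{W}(\nu^{\ast},\mu_T)=\int g_0\,d\nu^{\ast}+\int g_0^{\ast}\,d\mu_T$. But when you pass to $\int \phi^{g_n}(0,\cdot)\,d\nu_0$, this quantity is $C^{\ast}_{\nu_0}(g_n)=\sup_{\nu}\{\int g_n\,d\nu - C(\nu_0,\nu)\}$, not $\int g_n\,d\nu^{\ast}-C(\nu_0,\nu^{\ast})$. Nothing in your construction forces $\nu^{\ast}$ to be a maximizer of $\nu\mapsto \int g_0\,d\nu - C(\nu_0,\nu)$; that would be the \emph{other} half of a saddle-point condition for the pair $(\nu^{\ast},g_0)$, and establishing it is precisely the nontrivial content of the duality you are trying to prove. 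Consequently your limit only yields
\[
\liminf_n\Big(\int g_n^{\ast}\,d\mu_T+\int \phi^{g_n}(0,\cdot)\,d\nu_0\Big)\ \ge\ \int g_0^{\ast}\,d\mu_T+\int g_0\,d\nu^{\ast}-C(\nu_0,\nu^{\ast})=\overline{B}(\nu_0,\mu_T),
\]
which is the direction you already had, not the one you need.

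The paper avoids this circularity by never trying to exhibit a near-optimal $g$. Instead it shows (via the interpolation) that $\mu_T\mapsto \overline{B}(\nu_0,\mu_T)$ is concave and upper semicontinuous on $\mathcal{P}_1(M^{\ast})$, and then applies Fenchel--Moreau to $-\overline{B}_{\nu_0}$. This gives $\overline{B}_{\nu_0}(\mu_T)=\inf_{f\in\lip{M^{\ast}}}\{-\int f\,d\mu_T + (-\overline{B}_{\nu_0})^{\ast}(f)\}$ directly, with the hard inequality coming for free from the biconjugate identity. The conjugate is then computed via the interpolation and the $\overline{W}$-duality lemma to be $C^{\ast}_{\nu_0}((-f)^{\ast})$; restricting to concave $f$ (equivalently convex $g=(-f)^{\ast}$) loses nothing because $(-f)^{\ast\ast}\le -f$, and finally a mollification argument \`a la Mikami reduces from convex Lipschitz $g$ to $g\in\convex\dbound$. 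If you want to salvage your route, you would need an independent minimax or saddle-point theorem for $\sup_\nu\inf_g\{\int g\,d\nu+\int g^{\ast}\,d\mu_T - C(\nu_0,\nu)\}$; the paper's use of concavity/usc of $\overline{B}_{\nu_0}$ is exactly what supplies this.
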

Once more we may identify optimal processes by this formulation:
\begin{corollary}[Optimal Processes for $\overline{B}$]
If the assumptions of \cref{thm:Boverdual} are satisfied with $d\mu_T\ll d\lambda$, then $(V,X(t))$ is a maximizing process for $\overline{B}(\nu_0,\mu_T)$ iff it is a solution to the SDE
\begin{align*}
    dX=&\lim_{n\rightarrow\infty}\nabla_p H(t,x,\nabla\phi_n(t,X))\,dt+dW_t\\ 
    V=&\lim_{n\rightarrow\infty}\nabla\phi_n^T(X(T)), 
\end{align*}
for a sequence of solutions $\phi_n$ to (\ref{eq:HJB2}) that is minimizing for the dual problem.
\end{corollary}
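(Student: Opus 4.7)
The plan is to mirror the proof of the corresponding corollary for $\underline{B}$, using the interpolation formula of Theorem~3 to decouple $\overline{B}$ into a $\overline{W}$ piece and a $C$ piece, and then to identify the sequence $\phi_n$ from the dual in Theorem~4 as an optimality certificate for both pieces simultaneously. By Theorem~3 there exists $\nu^\ast\in\mathcal{P}_1(M)$ attaining
\[
\overline{B}(\nu_0,\mu_T)=\overline{W}(\nu^\ast,\mu_T)-C(\nu_0,\nu^\ast),
\]
so $(V,X)$ is maximizing for $\overline{B}$ if and only if $X$ minimizes $\mathscr{A}$ among processes with $X(0)\sim\nu_0$ and $X(T)\sim\nu^\ast$, and simultaneously $(X(T),V)$ is an $\overline{W}(\nu^\ast,\mu_T)$-optimal coupling.

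I would then treat the two pieces in turn. Since $\mu_T\ll d\lambda$, Brenier's theorem applied to $\overline{W}(\nu^\ast,\mu_T)$ yields a convex function $g$ such that $V=\nabla g(X(T))$ holds almost surely under the optimal coupling, and Kantorovich duality realizes $g$ as the limit of a minimizing sequence of convex potentials $g_n$. For the $C(\nu_0,\nu^\ast)$ piece, the Mikami-Thieullen characterization used in the $\underline{B}$-corollary shows that any optimal drift takes the form $\beta_X(t,x)=\lim_n\nabla_p H(t,x,\nabla\phi_n(t,x))$ for a maximizing sequence $\phi_n$ in the dual of $C(\nu_0,\nu^\ast)$. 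The identification step is to verify that one may take these $\phi_n$ to be the HJB2 backward evolutions, with terminal data $g_n$, of a minimizing sequence for the dual of Theorem~4. This works because the chain
\[
\overline{B}(\nu_0,\mu_T)\le\overline{W}(\nu^\ast,\mu_T)-C(\nu_0,\nu^\ast)\le\int g^\ast\,d\mu_T+\int \phi^g\,d\nu_0
\]
collapses to equality on such a sequence, so the outer Brenier saturation (via $g_n$) and the inner HJB2 saturation (via $\phi_n$) occur simultaneously. Reading off the two pieces yields the stated SDE and the terminal relation $V=\lim_n\nabla\phi_n^T(X(T))$. The converse direction is obtained by reversing the inequalities: any $(V,X)$ satisfying the stated equations saturates every step in the chain and so realizes the supremum in $\overline{B}$.

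The main obstacle is the identification just described. Concretely, one must verify that a minimizing sequence in the outer dual of Theorem~4 simultaneously yields Brenier-type potentials in its terminal data and saturates the Mikami-Thieullen dual of $C(\nu_0,\nu^\ast)$ once $\nu^\ast$ is fixed. This entails controlling the slack both in the pointwise Fenchel inequality $\langle V,X(T)\rangle\le g^\ast(V)+g(X(T))$ and in the trajectory-wise HJB2 inequality obtained from It\^o's formula, and then passing $n\to\infty$ inside the SDE and the coupling relation. These are the same technical steps carried out for the $\underline{B}$-corollary, and the machinery should transfer with straightforward sign changes reflecting the passage from infimum to supremum and from the concave Legendre transform $\widetilde{\cdot}$ to the convex $\cdot^\ast$.
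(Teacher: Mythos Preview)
Your proposal is correct and arrives at the same conclusion, but the paper takes a more direct route. Rather than first invoking the interpolation of Theorem~3 to produce an optimal intermediate $\nu^\ast$ and then separately applying Brenier and Mikami--Thieullen before re-identifying the two sequences, the paper works directly with a minimizing sequence $\phi_n$ for the dual in Theorem~4. Writing optimality as
\[
\mathbb{E}\Bigl[\langle X(T),V\rangle-\int_0^T L\,dt\Bigr]=\lim_n\mathbb{E}\bigl[(\phi_n^T)^\ast(V)+\phi_n^0(X(0))\bigr],
\]
one adds and subtracts $\phi_n^T(X(T))$ and applies It\^o's formula to $\phi_n^0(X(0))-\phi_n^T(X(T))$ using (HJB2). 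This immediately produces the two pointwise inequalities you name --- the Fenchel inequality $(\phi_n^T)^\ast(V)+\phi_n^T(X(T))\ge\langle V,X(T)\rangle$ and the Hamiltonian inequality $\langle\beta_X,\nabla\phi_n^t\rangle-H(t,X,\nabla\phi_n^t)\le L(t,X,\beta_X)$ --- and forces both to be saturated in $L^1$, hence a.e.\ along a subsequence. That is exactly your ``identification step,'' but reached without the detour through $\nu^\ast$; the interpolation is implicit. Your framing makes the Brenier structure more visible, at the cost of the extra identification argument you flag as the main obstacle.

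One technical point to fix: you invoke $\mu_T\ll d\lambda$ to obtain $V=\nabla g(X(T))$ from Brenier, but absolute continuity of $\mu_T$ gives the transport map in the \emph{other} direction, $X(T)=\nabla g^\ast(V)$. To get $V$ as a function of $X(T)$ you need $\nu^\ast\ll d\lambda$, which does hold here (the terminal law of a nondegenerate It\^o diffusion has a density) but is not the hypothesis you cited. The paper avoids this issue by keeping $\phi_n^T\in\convex\dbound$ smooth throughout, so $\nabla\phi_n^T(X(T))$ is always defined, and only passing to the limit at the end.
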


\section{Proofs}
\setcounter{theorem}{0}
We first show a coercivity result that allows us to explore the structure of the problem and some of the consequences of our assumptions. This proposition is useful for showing existence of minimizers to our interpolation formula in \cref{thm:interpol}.

\begin{prop}[Coercivity of $C$]\label{prop:Ccoerc}
For any fixed $\nu_T\in\mathcal{P}_1(M)$, $N\in\mathbb{R}$, the set of measures $\nu\in \mathcal{P}_1(M)$ satisfying 
\begin{equation}\label{eq:tight}
    C(\nu,\nu_T)\le N\int \abs{x}\,d\nu(x)
\end{equation}is tight. In other words,  for every $\epsilon>0$ there exists an $R$ such that if $\nu\in \mathcal{P}_1(M)$ satsifies \cref{eq:tight}), then $\nu(\ball{R}{0}^c)\leq \epsilon$. %don't satisfy \cref{eq:tight}).
\end{prop}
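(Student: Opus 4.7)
The plan is to extract a uniform \emph{a priori} bound on the first moment $\int_M \abs{x}\,d\nu(x)$ across all $\nu$ satisfying \cref{eq:tight}, and then invoke Markov's inequality: once $\int \abs{x}\,d\nu \le K$ holds uniformly, $\nu(\ball{R}{0}^c) \le K/R$ yields tightness. We may assume $N>0$; for $N\le 0$, positivity of $L$ forces the relevant set to be empty and the claim is vacuous.

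The starting point is the SDE structure of $\mathcal{A}_\nu^{\nu_T}$: any semimartingale $X(\cdot)$ in this class decomposes as $X(T)-X(0)=\int_0^T \beta_X(t,X)\,dt + W_T$ for a Brownian motion $W$, so that
\begin{equation*}
\abs{X(0)} \le \abs{X(T)} + \int_0^T \abs{\beta_X(t,X)}\,dt + \abs{W_T}.
\end{equation*}
Taking expectations produces $\int_M \abs{x}\,d\nu \le \int_M \abs{y}\,d\nu_T + \expect{\int_0^T\abs{\beta_X}\,dt} + \sqrt{Td}$, using $\expect{\abs{W_T}}\le\sqrt{Td}$ for $M=\mathbb{R}^d$.

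The decisive step is to absorb the drift integral into $C(\nu,\nu_T)$ itself. By \cref{prop:A1convex}, $L(t,x,v) \ge \ell(v)$ with $\ell(v)\ge \alpha\abs{v}^\delta - C_0$ and $\delta>1$. A direct Young-type estimate then yields, for every $\eta>0$, a constant $C_\eta$ with $\abs{v} \le \eta\,\ell(v) + C_\eta$. Choosing a process within $\epsilon$ of the infimum, so that $\mathscr{A}(X)\le C(\nu,\nu_T)+\epsilon \le N\int\abs{x}\,d\nu + \epsilon$, we obtain
\begin{equation*}
\expect{\int_0^T \abs{\beta_X}\,dt} \le \eta\,\mathscr{A}(X) + C_\eta T \le \eta N \int_M \abs{x}\,d\nu + \eta\epsilon + C_\eta T.
\end{equation*}
Picking $\eta = 1/(2N)$ and sending $\epsilon\to 0$, the $\int\abs{x}\,d\nu$ terms appear on both sides and combine to give $\int\abs{x}\,d\nu \le 2\int\abs{y}\,d\nu_T + K$ for a constant $K = K(T,d,N,\alpha,\delta,C_0)$ independent of $\nu$. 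Markov's inequality then delivers the claimed tightness.

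The main obstacle is the clean execution of this absorption: Young's inequality with an adjustable parameter $\eta$ is indispensable, exploiting $\delta>1$ precisely to trade the sublinear integrand $\abs{v}$ against the superlinear lower bound $\ell(v)$ and thereby decouple $\int\abs{x}\,d\nu$ from itself. One should also verify that a near-optimal process with $\expect{\int_0^T \abs{\beta_X}\,dt}<\infty$ exists, but this follows from $C(\nu,\nu_T)\le N\int\abs{x}\,d\nu < \infty$ (since $\nu\in\mathcal{P}_1$) combined with the $\delta$-superlinearity of $\ell$ and Jensen's inequality.
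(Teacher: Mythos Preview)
Your argument is correct and takes a genuinely different route from the paper. The paper argues by contradiction on the tail: assuming $\nu(\ball{R}{0}^c)>\epsilon$, it restricts attention to the event $\{\abs{X(0)}>R\}$, applies Jensen to the convex minorant $\ell$ to lower-bound the action by a $\delta$-th power of the time-averaged drift, and then invokes the monotone Hoeffding--Fr\'echet coupling to minimize $\expect{\bigl\lvert\abs{Y(0)}-\abs{Y(T)}\bigr\rvert^\delta}$ over admissible joint laws. This produces a lower bound on $\mathscr{A}(X)$ that outpaces $N\int\abs{x}\,d\nu$ once $R$ is large. Your approach instead bounds the \emph{global} first moment $\int\abs{x}\,d\nu$ directly: the SDE decomposition plus the Young-type trade-off $\abs{v}\le \eta\,\ell(v)+C_\eta$ lets you absorb the drift integral into $\eta N\int\abs{x}\,d\nu$, and choosing $\eta N<1$ closes the loop. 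This is shorter, avoids the coupling lemma entirely, and actually yields the stronger conclusion of a uniform first-moment bound (from which tightness follows by Markov). One minor point: your dismissal of $N\le 0$ as ``vacuous'' is not quite accurate, since $C(\nu,\nu_T)=0$ is not a priori impossible; but the case $N=0$ is covered by the same absorption argument with $\eta N=0$, and $N<0$ forces $\nu=\delta_0$, so the claim still holds.
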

\begin{proof}
First we define the set of measures $\mathcal{T}_{\epsilon,R}:=\{\nu\in\mathcal{P}_1(M):\nu(\ball{R}{0}^c)>\epsilon\}$. We will assume $\nu\in\mathcal{T}_{\epsilon,R}$ for what follows.
\par 
We begin by defining $R_1$ to be such that $\nu_T(\ball{R_1}{0}^c)<\epsilon$. Let $X\in\mathcal{A}$ be a stochastic process with $X(0)\sim \nu$ and $X(T)\sim \nu_T$. We leave $R$ to be defined later, but note that if we define the set $\Omega_R:=\{ \abs{X(0)}>R\}$, then our assumption on $\nu$ yields $\mathbb{P}(\Omega_R)>\epsilon$. By positivity of $L$, this allows us to say that $\mathscr{A}(X)\ge \mathscr{A}(1_{\Omega_R}X)$ (henceforth we define the process $Y(t):=1_{\Omega_R}(\omega)X(t)$). Furthermore we can assume the probability of a path originating at a distance beyond $R$ from the origin and terminating within distance $R_1$ from the origin is
\begin{equation}\label{eq:crossbound}\begin{split}
    \P{\{\abs{X(0)}>R\}\cap\{\abs{X(T)}\le R_1\}}\ge &1-\P{\abs{X(0)}\le R}-\P{\abs{X(T)}> R_1}\\ 
    =& \P{\abs{X(0)}>R}-\P{\abs{X(T)}> R_1}\\ 
    =&\nu(\ball{R}{0}^c)-\nu_T(\ball{R_1}{0}^c)> \epsilon/2.
\end{split}
\end{equation}
\par 
Applying (A1) via our remark in \cref{prop:A1convex}, we assume that there is a convex function $\ell:M^\ast\rightarrow\mathbb{R}$ such that for all $\abs{u}>U$
\begin{equation}\label{eq:A1coerc}
        \frac{\ell(u)}{\abs{u}^\delta}>\alpha,\qquad 
        (\alpha>0,\delta>1),
\end{equation}
defining $\alpha,\delta$. Recall that $\ell(\abs{v})$ is a lower bound on $L(t,x,v)$.
\par 
Examining the process $Y$, we can take advantage of the convexity of $\ell$ to apply Jensen's inequality:
\begin{equation}\begin{split}\label{eq:Jensencoerc}
    \expect{\int_0^T L(t,Y,\beta_Y(t,Y))\,dt}\ge\,& \expect{\int_0^T \ell(\abs{\beta_Y(t,Y)})\,dt}\overset{(\text{J})}{\ge}\expect{\ell(\abs{V})T}\\
    \overset{(\ref{eq:A1coerc})}{>}&\alpha T\expect{1_{\abs{V}>U}\abs{V}^\delta}\ge\alpha T\bracket{\expect{\abs{V}^\delta}-U^\delta},
\end{split}
\end{equation}
where $\beta_Y:=1_{\Omega_R}\beta_X$ is the drift associated with the process $Y$ and $V:=(Y(T)-Y(0))/T$ is its time-average.% Note that by \cref{eq:crossbound}, $\mathbb{P}(\abs{V}>\frac{R-R_1}{T})>\epsilon/2$ and $\mathbb{P}(\Omega_R)>\epsilon$, thus the inequality (\ref{eq:A1coerc}) follows if $R>R_1+UT$.
\par 
Putting this all together, we get
\begin{equation}\label{eq:Coerctogether}\begin{split}
    \mathscr{A}(X)\ge& \mathscr{A}(Y)\overset{(\ref{eq:Jensencoerc})}{>} \alpha T\bracket{\expect{\abs{V}^\delta}-U^\delta}=\alpha T\bracket{\expect{\abs{\frac{Y(T)-Y(0)}{T}}^\delta }-U^\delta}\\ 
    \overset{(\triangle)}{>}&\frac{\alpha}{T^{\delta-1}}\bracket{\expect{\abs{\abs{Y(0)}-\abs{Y(T)}}^\delta}-\bracket{UT}^\delta}
\end{split}
\end{equation}
Minimizing the expectation over $(Y(0),Y(T))$ is a type of well known optimal transport problem, with the optimal plan given by the monotone Hoeffding-Frechet mapping $x\mapsto G_{\abs{Y(0)}}(G_{\abs{Y(T)}}^{-1}(x))$, where $G_Z(t):=\inf\{x\in\mathbb{R}:t\ge\P{Z\le x}\}$ is the quantile function associated with the random variable $Z$ \cite[Theorem 1.1]{beiglbock}. Thus the optimal plan maps each quantile in one measure to the corresponding quantile in the other.
\par 
Focussing on the expectation in \cref{eq:Coerctogether}, we get
\begin{equation}\begin{split}\label{eq:finalcoerce}
    \expect{\abs{\abs{Y(0)}-\abs{Y(T)}}^\delta}\ge &\int\abs{x-y}^\delta\,d\bracket{\bracket{G_{\abs{Y(0)}}\times G_{\abs{Y(T)}} }_\#\lambda_{[0,1]}}(x,y)\\
    \overset{(\text{J})}{\ge}& \abs{\int x\,d\bracket{G_{\abs{Y(0)}}}_\#\lambda_{[0,1]}(x)-b}^\delta\\
    =& \bracket{\int_{\ball{R}{0}^c}\abs{x}\,d\nu(x)-b}^\delta,
\end{split}
\end{equation}
where $b:=\int\abs{x}\,d\nu_T/\epsilon>\expect{\abs{Y(T)}}$ and we require that $R>b/\epsilon$. We thus want to find $R$ such that
\begin{equation*}
    \frac{\alpha}{T^{\delta-1}} \bracket{\bracket{\int_{\ball{R}{0}^c}\abs{x}\,d\nu(x)-b}^\delta-\bracket{UT}^\delta}>N\bracket{\int_{\ball{R}{0}^c}\abs{x}\,d\nu(x)+R(1-\epsilon)}\ge N\int\abs{x}\,d\nu(x)
\end{equation*}
Letting $I_\nu(R):=\int_{\ball{R}{0}}\abs{x}\,d\nu(x)$ (which is at least $R\epsilon$ for all $\nu\in\mathcal{T}_{\epsilon,R}$), we find the condition
\begin{equation*}
    \frac{\alpha}{T^{\delta-1}}  \bracket{\bracket{I_\nu(R)^{1-\frac{1}{\delta}}-bI_\nu(R)^{-\frac{1}{\delta}}}^\delta-\frac{\bracket{UT}^\delta}{I_\nu(R)}}>N\bracket{1+\frac{R(1-\epsilon)}{I_\nu(R)}},
\end{equation*}
which by using the mentioned bound on $I_\nu(R)$ is satisfied if $R$ satisfies
\begin{equation*}
    \frac{\alpha}{T^{\delta-1}} \bracket{\bracket{(R\epsilon)^{1-\frac{1}{\delta}}-b(R\epsilon)^{-\frac{1}{\delta}}}^\delta-\frac{\bracket{UT}^\delta}{R\epsilon}}>\frac{N}{\epsilon}.
\end{equation*}
Taking $R\rightarrow\infty$ satisfies this inequality, implying this set is non-empty and intersects with our earlier constraints that $R>R_1+UT$ and $R>b/\epsilon$.
\end{proof}
\begin{remark}
The same argument holds if we add any fixed constant to the right side of \cref{eq:tight}.\par 
Furthermore, \cref{eq:finalcoerce} can be used to show that if $\nu_1\in\mathcal{P}_1(M)$ and $\nu_0\in\mathcal{P}(M)\setminus\mathcal{P}_1(M)$, then $C(\nu_0,\nu_1)=C(\nu_1,\nu_0)=\infty$. This leads to our focus on distributions with finite first moment in the following.
\end{remark}

We now state some properties of $C$ shown by Mikami:
\begin{prop}\label{prop:Cdualsc}(\cite[Theorem 2.1]{mikami}) 
If the Lagrangian satisfies (A1)-(A4), then 
\begin{equation*}
    C(\nu_0,\nu_T)=\sup\left\{\int_M f(x)\,d\nu_T-\int_M \phi^f(0,x)\,d\nu_0\middle\rvert f\in\mathcal{C}_b^\infty\right\},
\end{equation*}
where $\phi_f$ solves (\ref{eq:HJB}). Furthermore, $(\mu,\nu)\mapsto C(\mu,\nu)$ is convex and lsc under the weak topology.
\end{prop}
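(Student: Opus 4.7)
The plan is to establish the duality by showing two inequalities, then verify convexity and lower semicontinuity separately. For the ``easy'' direction $C(\nu_0,\nu_T) \geq \sup$, fix $f \in \mathcal{C}_b^\infty$ and an admissible process $X(\cdot)$ with drift $\beta_X$. Applying It\^o's formula to $\phi^f(t, X(t))$ and using that $\phi^f$ solves (\ref{eq:HJB}) yields
\begin{equation*}
\mathbb{E}[f(X(T))] - \mathbb{E}[\phi^f(0, X(0))] = \mathbb{E}\left[\int_0^T \left(\langle\nabla \phi^f, \beta_X\rangle - H(t, X, \nabla \phi^f)\right) dt\right],
\end{equation*}
where the stochastic integral term vanishes in expectation as a true martingale under (A3). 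The Fenchel-Young inequality $\langle p, v \rangle - H(t,x,p) \leq L(t,x,v)$ bounds the right side by $\mathscr{A}(X)$. Passing to the infimum over admissible $X$ and the supremum over $f$ gives the inequality.

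For the reverse direction, the natural strategy is convex duality via a minimax theorem. Viewing $C$ as the infimum over the joint law of $(X(\cdot),\beta_X)$ of a convex functional, subject to affine marginal constraints, one introduces dual variables $(g,f)$ for the constraints $X(0)\sim\nu_0$, $X(T)\sim\nu_T$ and applies Sion's (or Fenchel-Rockafellar's) theorem to swap inf and sup. The inner optimization over processes, with Lagrange terms $-\mathbb{E}[g(X(0))]$ and $\mathbb{E}[f(X(T))]$, is exactly a stochastic control problem whose value function solves (\ref{eq:HJB}) with terminal data $f$; the optimal $g$ is then identified with $\phi^f(0,\cdot)$. The principal obstacle is verifying the hypotheses of the minimax theorem and producing \emph{classical} (not merely viscosity) solutions of (\ref{eq:HJB})---this is where assumptions (A0) and (A3) enter, via standard parabolic regularity (cf.\ \cite[p.~210, Remark 11.2]{HJB}). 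The restriction to $f \in \mathcal{C}_b^\infty$ in the statement is then justified by an approximation argument on the terminal data.

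Convexity of $(\nu_0,\nu_T) \mapsto C(\nu_0,\nu_T)$ follows from a direct mixing construction: given near-optimal processes $X^i$ for $C(\nu_0^i,\nu_T^i)$ for $i=1,2$, sample an independent Bernoulli variable $Z$ with $\mathbb{P}(Z=1)=\lambda$ and set $X := X^Z$. Then $X(0)\sim \lambda\nu_0^1+(1-\lambda)\nu_0^2$, $X(T)\sim \lambda\nu_T^1+(1-\lambda)\nu_T^2$, and $\mathscr{A}(X) = \lambda\mathscr{A}(X^1)+(1-\lambda)\mathscr{A}(X^2)$. Taking the infimum yields convexity. Weak lower semicontinuity is then immediate from the dual formula: $C$ is the supremum of the affine functionals $(\nu_0,\nu_T)\mapsto\int f\,d\nu_T - \int \phi^f(0,\cdot)\,d\nu_0$, each of which is weakly continuous since $f$ and $\phi^f(0,\cdot)$ are bounded and continuous. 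A supremum of weakly continuous functionals is weakly lsc, completing the proof.
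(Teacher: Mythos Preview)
The paper does not supply its own proof of this proposition: it is quoted verbatim from \cite[Theorem~2.1]{mikami} and only used as a black box. So there is no ``paper's proof'' to compare against; what you have written is a sketch of (roughly) the Mikami--Thieullen argument itself.

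Your outline of the easy direction and of joint convexity is correct and standard. The sketch of the hard direction, however, is where essentially all the content of \cite{mikami} lives, and your paragraph does not get past the heuristics. Two concrete points. First, there is a logical ordering issue: in \cite{mikami} (and as the present paper indicates in its discussion of the assumptions), lower semicontinuity of $C$ is established \emph{first}, via tightness arguments driven by (A1)--(A2) \cite[eqs.~(3.17),(3.38)--(3.41)]{Mikamilsc}, and convexity via \cite[Lemma~3.2]{mikami}; only then is the Fenchel biconjugate $C=C^{\ast\ast}$ invoked to obtain the duality. Your proposal reverses this, deducing lsc from the dual formula. That is circular if you mean to apply Fenchel--Rockafellar to the functional $\nu_T\mapsto C(\nu_0,\nu_T)$, since that theorem already requires $C$ to be lsc. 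If instead you mean to run a Sion-type minimax directly at the level of processes, you must identify a topology on path-measures in which the action is lsc and the admissible set enjoys enough compactness---this is exactly the non-trivial analytic work carried out in \cite{Mikamilsc,mikami}, and it is not bypassed by invoking ``Sion's theorem''. Second, the identification of the inner optimization with the HJB value function requires more than parabolic regularity for smooth terminal data: one must also show that restricting to $f\in\mathcal{C}^\infty_b$ incurs no loss in the outer supremum, which in \cite{mikami} is done by a mollification argument using (A2) (and which the present paper reproduces in the proof of Theorem~\ref{thm:stocbdual}). Your ``approximation argument on the terminal data'' gestures at this but does not indicate how (A2) enters.

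In short: the easy inequality, convexity, and the deduction of lsc \emph{given} duality are fine; the hard inequality is asserted rather than proved, and the order in which you derive lsc would need to be changed (or justified independently) to avoid circularity.
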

\subsection{Minimizing Ballistic Cost}
Now that we have demonstrated the coercivity and lower semicontinuity of $C$, we can move on to investigating the minimizing ballistic cost: $\underline{B}$. These previous results are sufficient to show the existence of a minimizer to the interpolation:

\begin{theorem}[Interpolation of $\underline{B}$]\label{thm:interpol}
If $L$ satisfies the assumptions (A), then our stochastic ballistic cost may be written
\begin{equation}\label{eq:interpol}
    \lbstoc(\mu_0,\nu_T)=\inf_\nu\{\underline{W}(\mu_0,\nu)+C(\nu,\nu_T)\}.
    \end{equation}
Furthermore, this minimum is attained in the case where $\mu_0\in\mathcal{P}_1(M^\ast)$ and $\nu_T\in\mathcal{P}_1(M)$.
\end{theorem}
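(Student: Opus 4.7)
The plan is to establish the interpolation identity by proving two matching inequalities, and then to recover the minimizer via the direct method of the calculus of variations. For the easy direction $\lbstoc(\mu_0,\nu_T) \leq \inf_\nu\{\underline{W}(\mu_0,\nu)+C(\nu,\nu_T)\}$, I would fix any feasible $\nu$ and $\epsilon>0$, pick an $\epsilon$-optimal coupling $\pi \in \mathcal{T}(\mu_0,\nu)$ realizing $\underline{W}(\mu_0,\nu)$ and an $\epsilon$-optimal process $X \in \mathcal{A}_\nu^{\nu_T}$ realizing $C(\nu,\nu_T)$, and disintegrate $\pi$ along its second marginal. This yields a regular conditional kernel that lets me draw $V$ given $X(0)$ so that $(V,X(0)) \sim \pi$ and $V$ is independent of the Brownian noise driving $X$. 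The resulting pair is admissible for $\lbstoc(\mu_0,\nu_T)$ with total cost at most $\underline{W}(\mu_0,\nu)+C(\nu,\nu_T)+2\epsilon$; sending $\epsilon \to 0$ and taking the infimum over $\nu$ gives the inequality.

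For the reverse inequality, I would take any admissible $(V,X(\cdot))$ for $\lbstoc(\mu_0,\nu_T)$ and set $\nu := \mathrm{Law}(X(0))$: then $(V,X(0))$ is a coupling of $(\mu_0,\nu)$, forcing $\expect{\langle V,X(0)\rangle} \geq \underline{W}(\mu_0,\nu)$, while $X \in \mathcal{A}_\nu^{\nu_T}$ forces $\mathscr{A}(X) \geq C(\nu,\nu_T)$. Summing and infimizing over competitors yields the opposite inequality, completing the identity. For attainment under $\mu_0,\nu_T \in \mathcal{P}_1$, I would apply the direct method to a minimizing sequence $(\nu_n)$ with $\underline{W}(\mu_0,\nu_n) + C(\nu_n,\nu_T) \leq K$: first establish a linear lower bound $\underline{W}(\mu_0,\nu_n) \geq -N\int \abs{x}\,d\nu_n - D$ by exhibiting an admissible Kantorovich pair $(f,g)$ with $g(x)=-M\abs{x}$ tuned to $\mu_0$, which combined with the minimizing property yields $C(\nu_n,\nu_T) \leq N\int \abs{x}\,d\nu_n + (K+D)$, placing $(\nu_n)$ in the tight class of \cref{prop:Ccoerc}. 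A weak cluster point $\nu^\ast$ is then an attainer by the lower semicontinuity of both $\underline{W}(\mu_0,\cdot)$ and $C(\cdot,\nu_T)$ (\cref{prop:Cdualsc}).

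The hardest step will be establishing this linear lower bound on $\underline{W}(\mu_0,\nu)$ in $\int \abs{x}\,d\nu$ uniformly over the minimizing sequence, when $\mu_0$ is only assumed to lie in $\mathcal{P}_1(M^\ast)$. For compactly supported $\mu_0$, the test pair with $g(x)=-M\abs{x}$ (taking $M \geq \sup_{\mathrm{supp}\,\mu_0}\abs{v}$) makes the bound immediate via Kantorovich duality; for general $\mu_0 \in \mathcal{P}_1$, I expect to need a truncation of $\mu_0$'s tails together with a uniform integrability control to absorb the resulting error. A secondary technical point in the upper-bound construction is the measurable splicing of $\pi$ with the SDE dynamics of $X$, which is handled by disintegrating along $X(0)$ on a suitably enlarged probability space so that the spliced process remains in $\mathcal{A}^{\nu_T}$ with the prescribed drift.
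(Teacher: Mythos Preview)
Your proposal is correct and mirrors the paper's argument step for step: two inequalities via near-optimal competitors (disintegrating an $\epsilon$-optimal coupling along $X(0)$ in one direction, reading off $\nu=\mathrm{Law}(X(0))$ in the other), then attainment by the direct method using \cref{prop:Ccoerc} for tightness and \cref{prop:Cdualsc} for lower semicontinuity. On the point you single out as hardest---the linear lower bound $\underline W(\mu_0,\nu)\ge -N\int|y|\,d\nu-D$ needed to feed the minimizing sequence into \cref{prop:Ccoerc}---you are in fact more careful than the paper, which asserts $\underline W(\mu_0,\nu)\ge -\int|v|\,|y|\,d\mu_0(v)\,d\nu(y)$ and takes $N=\int|v|\,d\mu_0$; that estimate is too optimistic (e.g.\ $\mu_0=\tfrac12\delta_0+\tfrac12\delta_2$, $\nu=\tfrac12\delta_{-2}+\tfrac12\delta_0$ on $\mathbb R$ give $\underline W=-2<-1$). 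Your Kantorovich test pair with $g(x)=-M|x|$ and $M\ge\sup_{\mathrm{supp}\,\mu_0}|v|$ yields the correct bound when $\mu_0$ is compactly supported; for general $\mu_0\in\mathcal P_1$ no single such pair works, and your truncation idea is the right instinct, but completing it will require using the superlinear action growth inside the proof of \cref{prop:Ccoerc} to absorb the tail contribution rather than a pure uniform-integrability argument on first moments.
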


\begin{proof}
First, to show that $\underline{B}(\mu_0,\nu_T)$ is greater or equal to its interpolations, consider $(Y_n,X_n(\cdot))$ such that
\begin{equation*}
    \expect{\langle Y_n,X_n(0)\rangle +\int_0^T L(t,X_n(t),\beta_{X_n}(X_n,t))\,dt}< \lbstoc(\mu_0,\nu_T)+\tfrac{1}{n}.
\end{equation*}
Let $\nu_n:=PX_n(0)^{-1}$, then 
\begin{align*}
    \underline{W}(\mu_0,\nu_n)\le&\expect{\langle Y_n,X_n(0)\rangle}, \\
    \Cmik(\nu_n,\nu_T)\le&\expect{\int_0^T L(t,X_n(t),\beta_{X_n}(X_n,t))\,dt},
\end{align*}
so 
\begin{equation*}
    \inf_\nu\left\{\underline{W}(\mu_0,\nu)+\Cmik(\nu,\nu_T)\right\}\le\underline{W}(\mu_0,\nu_n)+\Cmik(\nu_n,\nu_T)< \lbstoc(\mu_0,\nu_T)+\tfrac{1}{n}.
\end{equation*}
Taking $n\rightarrow\infty$ gives the inequality.\par 
To get the reverse inequality, take $\nu_n$ to be a sequence such that 
\begin{equation*}
    \underline{W}(\mu_0,\nu_n)+\Cmik(\nu_n,\nu_T)<\inf_\nu\left\{\underline{W}(\mu_0,\nu)+\Cmik(\nu,\nu_T)\right\}+\tfrac{1}{n}.
\end{equation*}
We may then define an admissible stochastic process $Z_n(\cdot)\in\mathcal{A}_{\nu_n}^{\nu_T}$ such that
\begin{equation*}
    \expect{\int_0^T L(t,Z_n(t),\beta_{Z_n}(Z_n,t))\,dt}< \Cmik(\nu_n,\nu_T)+\tfrac{1}{n}.
\end{equation*}
Then, let $W_n(\omega)\sim \gamma_{Z_n(0)(\omega)}^n$, where $d\gamma_x^n(y)\otimes d\mu_0(x)=d\gamma_n(x,y)$ is the disintegration of a measure $\gamma_n$ such that
\begin{equation*}
    \int \langle y,x\rangle \,d\gamma_n(x,y)<\underline{W}(\mu_0,\nu_n)+\tfrac{1}{n}.
\end{equation*}
Thus
\begin{equation*}\begin{split}
    \lbstoc(\mu_0,\nu_T)\le&\langle W_n,Z_n\rangle+\int_0^T L(t,Z_n(t),\beta_{Z_n}(Z_n,t))\,dt<\underline{W}(\mu_0,\nu_n)+\Cmik(\nu_n,\nu_T)+\tfrac{2}{n}\\
    <&\inf_\nu\left\{\underline{W}(\mu_0,\nu)+\Cmik(\nu,\nu_T)\right\}+\tfrac{3}{n}.
\end{split}
\end{equation*}
Lastly, to show this minimum is achieved in the case where $\mu_0$ and $\nu_T$ have finite mean, we must show that the minimizing sequence $\nu_n$ is sequentially compact in the weak topology. To do this, we note that the set of measures $\nu$ such that
\begin{equation*}
    C(\nu,\nu_T)<N\int\abs{y}\,d\nu(y)+\lbstoc(\mu_0,\nu_T)+1
\end{equation*}
is tight by \cref{prop:Ccoerc}. If we let $N:=\int \abs{x}\,d\mu_0(x)$, then the collection of measures such that
\begin{equation*}\begin{split}
    \lbstoc(\mu_0,\nu_T)+1>&C(\nu,\nu_T)+\underline{W}(\mu_0,\nu)\\
    >&C(\nu,\nu_T)-\int\abs{x}\abs{y}\,d\mu_0(x)\,d\nu(y)\\ 
    \overset{(\text{F})}{=}&C(\nu,\nu_T)-N\int\abs{y}\,d\nu(y),
\end{split}
\end{equation*}
is tight, where (F) is application of Fubini's theorem. Thus, by Prokhorov's theorem our minimizing sequence of interpolating measures necessarily weakly converges to a minimizing measure.
\end{proof}
\begin{remark}
The attainment of a minimizing $\nu_0$ of \cref{eq:interpol} is sufficient to show that of a minimizer of $\underline{B}$ when it is finite, since the minimum is achieved for both $\underline{W}$ and $C$ \cite[Proposition 2.1]{mikami}.
\end{remark}

\begin{corollary}[Properties of $\lbstoc$]\label{cor:Blsc}
If the Lagrangian satisfies (A), then: 
\begin{enumerate}[label=\alph*)]
    \item $\nu\mapsto\lbstoc(\mu_0,\nu)$ is lower semi-continuous on $\mathcal{P}_1(M)$ for all $\mu_0\in\mathcal{P}_1(M^\ast)$.
    \item $(\mu_0,\nu_T)\mapsto \lbstoc(\mu_0,\nu_T)$ is jointly convex.
\end{enumerate}
\end{corollary}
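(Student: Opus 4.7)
The plan is to leverage the interpolation identity $\lbstoc(\mu_0,\nu_T)=\inf_\nu\{\underline{W}(\mu_0,\nu)+C(\nu,\nu_T)\}$ from \cref{thm:interpol} together with the established convexity and lower semi-continuity of its building blocks.

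For part (b), joint convexity is the easier item. The Wasserstein cost $\underline{W}$ is jointly convex in $(\mu_0,\nu)$ by the standard argument: convex combinations of admissible couplings remain admissible for the convex combination of marginals, and the cost $\int\langle x,y\rangle d\pi$ is linear in $\pi$. The dynamic cost $C$ is jointly convex in $(\nu,\nu_T)$ by \cref{prop:Cdualsc}. Thus $(\mu_0,\nu,\nu_T)\mapsto \underline{W}(\mu_0,\nu)+C(\nu,\nu_T)$ is jointly convex, and partial minimization of a jointly convex function over one argument preserves joint convexity in the remaining arguments; a direct verification via near-minimizers $\nu^1,\nu^2$ for the two endpoints and the competitor $\lambda\nu^1+(1-\lambda)\nu^2$ for the interpolation of $(\lambda\mu_0^1+(1-\lambda)\mu_0^2,\lambda\nu_T^1+(1-\lambda)\nu_T^2)$ makes this explicit.

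For part (a), fix $\mu_0\in\mathcal{P}_1(M^\ast)$ and let $\nu_T^n\to\nu_T$ in $\mathcal{P}_1(M)$. It suffices to handle the case $\limsup_n\lbstoc(\mu_0,\nu_T^n)<\infty$ and, passing to a subsequence realizing $\liminf$, extract a weak limit of near-optimal interpolating measures. By \cref{thm:interpol} choose $\nu^n$ attaining $\lbstoc(\mu_0,\nu_T^n)=\underline{W}(\mu_0,\nu^n)+C(\nu^n,\nu_T^n)$. The elementary bound $\underline{W}(\mu_0,\nu^n)\ge-\int|x||y|\,d\mu_0(x)\,d\nu^n(y)=-N\int|y|\,d\nu^n$ with $N:=\int|x|\,d\mu_0<\infty$ yields
\begin{equation*}
    C(\nu^n,\nu_T^n)\le \lbstoc(\mu_0,\nu_T^n)+N\int|y|\,d\nu^n.
\end{equation*}
Since $\{\nu_T^n\}$ converges in $\mathcal{P}_1$, its tightness radius and first-moment bound are uniform in $n$; the proof of \cref{prop:Ccoerc} depends on $\nu_T$ only through these two quantities, so its coercive estimate applies uniformly and forces $\{\nu^n\}$ to be tight. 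The same chain combined with the $\delta$-power lower bound on $L$ from (A1) (as used in \cref{eq:Jensencoerc}) gives a uniform bound on $\int|y|^\delta\,d\nu^n$, hence uniform integrability of the first moments by de la Vallée Poussin.

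Extract a subsequence with $\nu^n\to\nu^*$ in $\mathcal{P}_1(M)$. Bilinearity of the cost then makes $\nu\mapsto\underline{W}(\mu_0,\nu)$ continuous along this convergence, while \cref{prop:Cdualsc} gives $C(\nu^*,\nu_T)\le\liminf_n C(\nu^n,\nu_T^n)$ under weak convergence of both arguments. Combining,
\begin{equation*}
    \lbstoc(\mu_0,\nu_T)\le \underline{W}(\mu_0,\nu^*)+C(\nu^*,\nu_T)\le \liminf_n\bigl[\underline{W}(\mu_0,\nu^n)+C(\nu^n,\nu_T^n)\bigr]=\liminf_n\lbstoc(\mu_0,\nu_T^n),
\end{equation*}
which is the desired lower semi-continuity. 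I expect the main obstacle to be step three: verifying that the constants in \cref{prop:Ccoerc} can be chosen uniformly along $\{\nu_T^n\}$, and promoting weak convergence of the selected $\nu^n$ to convergence in $\mathcal{P}_1$ so that the Wasserstein term behaves continuously rather than merely lower semi-continuously.
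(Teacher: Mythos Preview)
Your argument for part (b) is exactly the paper's: define $h((\mu_0,\nu_T),\nu)=\underline{W}(\mu_0,\nu)+C(\nu,\nu_T)$, note it is jointly convex because $\underline{W}$ and $C$ are, and apply the elementary lemma that the partial infimum of a jointly convex function remains convex. The paper isolates this as a separate lemma and invokes it; your near-minimizer verification is precisely the proof of that lemma.

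For part (a) you take a genuinely more careful route than the paper. The paper's proof is two lines: pick minimizing interpolants $\nu_n$ for each $\nu_{T,n}$, then write
\[
\liminf_n\bigl[\underline{W}(\mu_0,\nu_n)+C(\nu_n,\nu_{T,n})\bigr]\ \ge\ \liminf_n\bigl[\underline{W}(\mu_0,\nu_n)+C(\nu_n,\nu_T)\bigr]\ \ge\ \lbstoc(\mu_0,\nu_T),
\]
citing only ``$C$ is lsc'' for the first step. That step is not transparent as written, since $\nu_n$ varies with $n$ and no limit of $\nu_n$ is extracted. Your plan fills exactly this gap: you use the coercivity \cref{prop:Ccoerc} (with constants depending on $\nu_T^n$ only through quantities that are uniform along a $\mathcal{P}_1$-convergent sequence, as you correctly observe) to get tightness of $\{\nu^n\}$, upgrade to $\mathcal{P}_1$-convergence via the $\delta$-moment bound coming from (A1), and then pass to the limit using joint lsc of $C$. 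The obstacles you flag are real but manageable; note in particular that you only need \emph{lower} semi-continuity of $\nu\mapsto\underline{W}(\mu_0,\nu)$ (not the continuity you claim), and this follows directly from Kantorovich duality, which expresses $\underline{W}(\mu_0,\cdot)$ as a supremum of functionals $\nu\mapsto\int g\,d\nu+\int\tilde g\,d\mu_0$ that are continuous along $W_1$-convergence when $g$ is Lipschitz. Your version is longer but makes the argument rigorous where the paper is elliptical.
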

\begin{proof}[Proof of a)]
Consider a sequence $\nu_{T,n}\in\mathcal{P}_1(M)$ that converges weakly to $\nu_T\in\mathcal{P}_1(M)$. We have
\begin{equation*}
    \lbstoc(\mu_0,\nu_{T,n})=\underline{W}(\mu_0,\nu_n)+C(\nu_n,\nu_{T,n}).
\end{equation*}
for a sequence of measures $\nu_n\in\mathcal{P}_1(M)$ by \cref{thm:interpol}. Since $C$ is lsc (\cref{prop:Cdualsc}), we get
\begin{equation*}
    \liminf_{n\rightarrow\infty} \underline{W}(\mu_0,\nu_n)+C(\nu_n,\nu_{T,n})\ge \liminf_{n\rightarrow\infty} \underline{W}(\mu_0,\nu_n)+C(\nu_n,\nu_{T})\ge \lbstoc(\mu_0,\nu_T).
\end{equation*}
\end{proof}
\begin{lemma}\label{lem:convexlemma}
If $(x,y)\mapsto h(x,y)$ jointly convex, then $\bar{h}(x):=\inf_y h(x,y)$ is convex.
\end{lemma}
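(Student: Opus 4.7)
The plan is to verify the standard fact that partial minimization preserves joint convexity, by transporting a near-minimizing pair $(y_1, y_2)$ through the convex combination and exploiting joint convexity of $h$.

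First I would fix two points $x_1, x_2$ in the domain and a parameter $t \in [0,1]$, with the aim of showing
\begin{equation*}
    \bar{h}(tx_1 + (1-t)x_2) \le t\,\bar{h}(x_1) + (1-t)\bar{h}(x_2).
\end{equation*}
The main case is when $\bar{h}(x_1), \bar{h}(x_2)$ are both finite. Given $\epsilon > 0$, I would use the definition of infimum to select $y_1, y_2$ with $h(x_i, y_i) < \bar{h}(x_i) + \epsilon$ for $i=1,2$. Then joint convexity of $h$ applied to the pairs $(x_1,y_1), (x_2,y_2)$ with weights $t, 1-t$ gives
\begin{equation*}
    h\bigl(tx_1 + (1-t)x_2,\; ty_1 + (1-t)y_2\bigr) \le t\,h(x_1,y_1) + (1-t)\,h(x_2,y_2) < t\,\bar{h}(x_1) + (1-t)\bar{h}(x_2) + \epsilon.
\end{equation*}
Since $ty_1 + (1-t)y_2$ is a particular admissible candidate in the infimum defining $\bar h$ at $tx_1 + (1-t)x_2$, I get $\bar{h}(tx_1+(1-t)x_2) \le t\,\bar{h}(x_1) + (1-t)\bar{h}(x_2) + \epsilon$, and letting $\epsilon \downarrow 0$ yields the desired inequality.

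The only mild nuisance is handling extended real values. If $\bar{h}(x_1) = +\infty$ or $\bar{h}(x_2) = +\infty$, the right-hand side is $+\infty$ and there is nothing to prove. If $\bar{h}(x_i) = -\infty$ for some $i$, I would instead pick a sequence $y_i^{(n)}$ with $h(x_i, y_i^{(n)}) \to -\infty$, run the same convex combination argument, and observe that the right-hand side of the convexity inequality tends to $-\infty$, forcing $\bar{h}(tx_1 + (1-t)x_2) = -\infty$ as well. This is the only step requiring any care, and it is not really an obstacle — it is just a bookkeeping matter. No substantive analytic difficulty arises, since neither continuity of $h$ nor attainment of the infima is needed: the proof is purely algebraic once joint convexity is available.
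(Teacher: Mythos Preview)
Your proof is correct and follows essentially the same approach as the paper: pick near-minimizers $y_1,y_2$ (the paper uses sequences $y_i^n$ with $h(x_i,y_i^n)\to\bar h(x_i)$ rather than an $\epsilon$-argument), form the convex combination $ty_1+(1-t)y_2$, and apply joint convexity of $h$. Your treatment of the extended-real cases is a bit more careful than the paper's, but no substantive difference.
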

\begin{proof}
Consider $x_1,x_2\in X$. Then there exist sequences $y^n_1,y^n_2\in Y$ such that 
\begin{equation*}
    \lim_n h(x_i,y_i^n)=\bar{h}(x_i).
\end{equation*}
Now take $x_\lambda:=\lambda x_1+(1-\lambda)x_2$, and $y^n_\lambda:=\lambda y_1^n+(1-\lambda)y_2^n$, then
\begin{equation*}
    \bar{h}(x_\lambda)\le h(x_\lambda,y_\lambda^n)\le \lambda h(x_1,y_1^n)+ (1-\lambda) h(x_2,y_2^n)\overset{n\rightarrow\infty}{\longrightarrow} \lambda \bar{h}(x_1)+ (1-\lambda) \bar{h}(x_2)
\end{equation*}
\end{proof}
\begin{proof}[Proof of \cref{cor:Blsc}b)]
Define the function
\begin{equation}\label{eq:convexinterpol}
    h((\mu_0,\nu_T),\nu):=\underline{W}(\mu_0,\nu)+\Cmik(\nu,\nu_T).
\end{equation}
Then by \cref{thm:interpol} 
\begin{equation}\label{eq:convexlemma}
    \lbstoc(\mu_0,\nu_T)=\inf_\nu \left\{h((\mu_0,\nu_T),\nu)\right\}.
\end{equation}
But $h((\mu_0,\nu_T),\nu)$ is jointly convex---as $(\mu_0,\nu)\mapsto\underline{W}(\mu_0,\nu)$ and $(\nu,\nu_T)\mapsto \Cmik(\nu,\nu_T)$ are both jointly convex (\cref{prop:Cdualsc}). Hence applying \cref{lem:convexlemma} gives the joint convexity of \cref{eq:convexlemma}.
\end{proof}
Now that we have shown that $\nu_T\mapsto \underline{B}(\mu_0,\nu_T)$ is convex and lsc, we are able apply convex duality to get our dual formulation.

\begin{theorem}[Dual of $\underline{B}$]\label{thm:stocbdual}
If $\nu_T\in\mathcal{P}_1(M^\ast)$ and $\mu_0$ with compact support are such that $\lbstoc(\mu_0,\nu_T)<\infty$, and the Lagrangian satisfies (A0)-(A4), then we have
\begin{equation}\label{eq:mindual}
    \lbstoc(\mu_0,\nu_T)=\sup_{-f\in \convex\lip{M}}\left\{\int f(x)\, d\nu_T(x)+\int \widetilde{\phi^f}(0,v)\,d\mu_0(v)\right\},
\end{equation}
where $\widetilde{g}$ is the concave legendre transform of $g$ and $\phi^{f}$ is the solution to the Hamilton-Jacobi-Bellman equation
\begin{align}\label{eq:HJB}\tag{HJB}
    \pderiv{\phi}{t}+\frac{1}{2}\Delta\phi(t,x)+H(t,x,\nabla\phi)=&0, & \phi(T,x)=&f(x).
\end{align}
\end{theorem}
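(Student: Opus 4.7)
The plan is to combine the interpolation identity of \cref{thm:interpol} with Mikami's HJB duality for $C$ (\cref{prop:Cdualsc}) and Kantorovich duality for $\underline{W}$, closing the loop by a Fenchel--Moreau biconjugation on the convex, weakly lsc functional $\nu \mapsto \lbstoc(\mu_0,\nu)$ established in \cref{cor:Blsc}. The easy direction $(\lbstoc \ge \sup)$ is a test-function argument: for $f$ with $-f \in \convex\lip{M}$, the stochastic control representation of the HJB flow shows $\phi^f(0,\cdot)$ inherits the Lipschitz constant of $f$, so $\widetilde{\phi^f(0,\cdot)}$ is finite on the compact support of $\mu_0$. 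By definition of the concave Legendre transform, $(\widetilde{\phi^f(0,\cdot)},\phi^f(0,\cdot))$ is admissible in the Kantorovich dual for $\underline{W}(\mu_0,\nu_0)$, yielding
\[
\int \widetilde{\phi^f}(0,v)\,d\mu_0(v) + \int \phi^f(0,x)\,d\nu_0(x) \le \underline{W}(\mu_0,\nu_0).
\]
Adding Mikami's weak inequality $\int f\,d\nu_T - \int \phi^f(0,\cdot)\,d\nu_0 \le C(\nu_0,\nu_T)$ cancels the $\nu_0$ integrals; taking the infimum over $\nu_0$ and invoking \cref{thm:interpol} delivers the inequality.

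For the reverse direction, I apply Fenchel--Moreau to $\nu \mapsto \lbstoc(\mu_0,\nu)$, viewed as a convex lsc functional on $\mathcal{P}_1(M)$ paired with continuous functions of linear growth. Using interpolation, the Legendre transform factors as
\[
\lbstoc^{\ast}(\mu_0,f) = \sup_{\nu_0}\bigl\{-\underline{W}(\mu_0,\nu_0) + C^{\ast}_{\nu_0}(f)\bigr\},
\]
where \cref{prop:Cdualsc}, interpreted as the Fenchel--Moreau identity for $C(\nu_0,\cdot)$, gives $C^{\ast}_{\nu_0}(f) = \int \phi^f(0,\cdot)\,d\nu_0$ for admissible $f$. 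Expanding $-\underline{W}(\mu_0,\nu_0) = \sup_{\pi \in \Pi(\mu_0,\nu_0)} \int(-\langle v,x\rangle)\,d\pi$ and disintegrating couplings against $\mu_0$ reduces the remaining supremum to $\int \sup_x\{\phi^f(0,x) - \langle v,x\rangle\}\,d\mu_0(v) = -\int \widetilde{\phi^f}(0,v)\,d\mu_0(v)$. Plugging this into the identity $\lbstoc(\mu_0,\nu_T) = \sup_f\bigl\{\int f\,d\nu_T - \lbstoc^{\ast}(\mu_0,f)\bigr\}$ produces the theorem.

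The main obstacle is verifying the biconjugation on the prescribed class $\{f : -f \in \convex\lip{M}\}$. Mikami's duality is stated for $f \in C_b^\infty$, whereas the formula demands concave Lipschitz terminal data: concavity of $f$ ensures $\widetilde{\phi^f(0,\cdot)}$ is non-degenerate, and the Lipschitz bound together with compact support of $\mu_0$ secures integrability. Transferring Mikami's supremum to this class should use an approximation argument---mollification preserves both concavity and the Lipschitz constant while placing $f$ in $C_b^\infty$---combined with continuity of the HJB flow $f \mapsto \phi^f(0,\cdot)$ and of the concave Legendre transform under such approximations. One must also check that restricting Mikami's supremum to the concave Lipschitz subclass does not decrease its value, which is precisely where the compact support hypothesis on $\mu_0$ plays its critical role: it localizes all concave conjugates to a fixed compact set in $M^\ast$, converting a delicate non-compactness issue into an approximation on a compact domain.
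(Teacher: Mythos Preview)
Your overall architecture matches the paper's: Fenchel--Moreau biconjugation of $\nu\mapsto\lbstoc(\mu_0,\nu)$ (from \cref{cor:Blsc}), computation of $\lbstoc_{\mu_0}^\ast(f)$ via the interpolation of \cref{thm:interpol}, Mikami's identity $C_{\nu_0}^\ast(f)=\int\phi^f(0,\cdot)\,d\nu_0$, and then the $\underline{W}$ duality. Your disintegration computation of $\sup_{\nu_0}\{\int\phi^f(0,\cdot)\,d\nu_0-\underline{W}(\mu_0,\nu_0)\}=-\int\widetilde{\phi^f}(0,\cdot)\,d\mu_0$ is precisely the content of the paper's \cref{lem:Wlegendre}, just argued from the coupling side rather than via $F_\mu^{\ast\ast}=F_\mu$; the compact-support hypothesis enters there (to keep the Kantorovich potentials Lipschitz), which is essentially what you identified.

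The genuine divergence is in how you close the gap between the biconjugate supremum over all of $\lip{M}$ and the restricted class where $\phi^f$ is actually defined. You propose mollifying $f$ and invoking continuity of $f\mapsto\phi^f(0,\cdot)$ together with stability of the concave transform. The paper does \emph{not} do this: it never defines $\phi^f$ for non-smooth $f$, and never proves continuity of the HJB flow in the terminal datum. Instead it works entirely at the level of the abstract conjugate $\lbstoc_{\mu_0}^\ast$, translating the \emph{process} by an independent $H_\epsilon\sim\eta_\epsilon$ and using assumption (A2) to compare $L(s,X,\beta)$ with $L(s,X+H_\epsilon,\beta)$ up to the modulus $\Delta L(0,\epsilon)$. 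This yields directly
\[
\lbstoc_{\mu_0}^\ast(f_\epsilon)\le \frac{\lbstoc_{\mu_0\ast\eta_\epsilon}^\ast\bigl((1+\Delta L(0,\epsilon))f\bigr)}{1+\Delta L(0,\epsilon)}+T\frac{\Delta L(0,\epsilon)}{1+\Delta L(0,\epsilon)}+d\epsilon^2,
\]
after which taking the sup over $f\in\lip{M}$ and sending $\epsilon\searrow0$ finishes. Your route is not wrong in principle, but it requires results (well-posedness and terminal-data continuity of (\ref{eq:HJB}) for merely Lipschitz $f$) that are neither available in the paper nor obviously cheaper than the (A2) argument; the paper's translation trick is the concrete substitute, and (A2) is the assumption that makes it work.
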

\begin{lemma}\label{lem:Wlegendre}
For $\mu\in\mathcal{P}(M)$ with compact support, define $\underline{W}_\mu:\mathcal{M}_1(M^\ast)\rightarrow \mathbb{R}$, (where $\mathcal{M}_1(M^\ast)$ is the linear space of measures $\nu$ on $M^\ast$ such that $\int_{M^*} \bracket{1+\abs{x}}\,d\mu<\infty$) to be
\begin{equation*}
    \underline{W}_{\mu}(\nu):=\begin{cases} \underline{W}(\mu,\nu)&\nu\in\mathcal{P}_1(M)\\
    \infty&\text{otherwise}.
    \end{cases}
\end{equation*}Then, for $f\in \lip{M}$, the convex Legendre transform of $\underline{W}_\mu$ is given by,
\begin{equation*}
    \underline{W}_{\mu}^\ast(f)=-\int \widetilde{f}\,d\mu.
\end{equation*}
\end{lemma}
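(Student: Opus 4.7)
My plan is to prove the two inequalities separately. Since $\underline{W}_\mu \equiv +\infty$ on $\mathcal{M}_1\setminus\mathcal{P}_1$, the convex Legendre transform unpacks as
$$\underline{W}_\mu^*(f) = \sup_{\nu \in \mathcal{P}_1(M)}\left\{\int f\,d\nu - \underline{W}(\mu,\nu)\right\}.$$

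For the upper bound, I would exploit the Young-type inequality $\widetilde{f}(v) + f(x) \le \langle v, x\rangle$ that is built into the definition of the concave Legendre transform. Integrating both sides against any coupling $\pi \in \mathcal{T}(\mu,\nu)$ yields $\int \widetilde{f}\,d\mu + \int f\,d\nu \le \int\langle v,x\rangle\,d\pi(v,x)$; taking the infimum over $\pi$ gives $\int \widetilde{f}\,d\mu + \int f\,d\nu \le \underline{W}(\mu,\nu)$, and rearranging and taking the supremum over $\nu$ produces $\underline{W}_\mu^*(f) \le -\int \widetilde{f}\,d\mu$.

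For the reverse inequality, the natural strategy is to construct an approximately optimal $\nu$ by pushing $\mu$ forward under a near-minimizer of $x\mapsto \langle v,x\rangle - f(x)$. For each $\epsilon > 0$ the set $\{x:\langle v,x\rangle - f(x) < \widetilde{f}(v) + \epsilon\}$ is non-empty and open, and a Borel selection argument (e.g.\ Kuratowski--Ryll-Nardzewski, or a minimum-index choice along a countable dense subset of $M$) produces a measurable $T_\epsilon:M^\ast\to M$ with $\langle v,T_\epsilon(v)\rangle - f(T_\epsilon(v)) \le \widetilde{f}(v) + \epsilon$ at $\mu$-a.e.\ $v$. Setting $\nu_\epsilon := (T_\epsilon)_\#\mu$ and using the coupling $\pi_\epsilon := (\mathrm{id},T_\epsilon)_\#\mu$, I would compute
$$\underline{W}(\mu,\nu_\epsilon) \le \int \langle v,T_\epsilon(v)\rangle\,d\mu(v) \le \int \widetilde{f}\,d\mu + \int f\,d\nu_\epsilon + \epsilon,$$
so that $\int f\,d\nu_\epsilon - \underline{W}(\mu,\nu_\epsilon) \ge -\int \widetilde{f}\,d\mu - \epsilon$, and sending $\epsilon\downarrow 0$ closes the gap.

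The main technical obstacle is ensuring that $\nu_\epsilon$ actually lives in $\mathcal{P}_1(M)$, i.e., controlling the growth of $T_\epsilon(v)$ in $v$. The Lipschitz bound on $f$ together with the compactness of $\mathrm{supp}(\mu)$ should allow me to confine the selection to a ball of radius depending only on $\mathrm{Lip}(f)$ and the diameter of $\mathrm{supp}(\mu)$, making $\nu_\epsilon$ compactly supported. The degenerate case where $\widetilde{f}(v) = -\infty$ on a set of positive $\mu$-mass must be handled separately: by driving $\langle v,x_v\rangle - f(x_v)$ arbitrarily negative along the selection and carrying out the same pushforward calculation, one finds $\underline{W}_\mu^*(f) = +\infty$, consistent with the convention $-\int\widetilde{f}\,d\mu = +\infty$.
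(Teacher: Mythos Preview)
Your proof is correct in outline and takes a genuinely different route from the paper. The paper does not compute the supremum over $\nu$ directly; instead it writes Kantorovich duality as $\underline{W}_\mu = F_\mu^*$ with $F_\mu(f) := -\int \widetilde{f}\,d\mu$, and then argues that $\underline{W}_\mu^* = F_\mu^{**} = F_\mu$ by checking that $F_\mu$ is convex (via pointwise convexity of $g\mapsto g^*(v)$) and lower semicontinuous (via Fatou). Your approach is more elementary in that it does not invoke Kantorovich duality as a black box: the upper bound is just the trivial half of that duality, and the lower bound is built by hand through a pushforward along a near-minimizing selection. The price you pay is the measurable-selection step and the verification that $\nu_\epsilon\in\mathcal{P}_1(M)$. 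On that last point, your claim that the selection can be confined to a ball depending only on $\mathrm{Lip}(f)$ and $\mathrm{diam}(\mathrm{supp}\,\mu)$ is not right as stated: even when $\widetilde{f}(v)$ is finite, the $\epsilon$-minimizers of $x\mapsto\langle v,x\rangle - f(x)$ may escape to infinity as $\epsilon\downarrow 0$ (take $f(x)=-1/(1+|x|)$ at $v=0$). A cleaner fix is to truncate first: set $\widetilde{f}_N(v):=\inf_{|x|\le N}\{\langle v,x\rangle-f(x)\}$, select $T_N$ inside the ball of radius $N$, and use $\widetilde{f}_N\downarrow\widetilde{f}$ with monotone convergence to pass to the limit. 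This makes $\nu_N$ compactly supported automatically and handles the degenerate case $\int\widetilde{f}\,d\mu=-\infty$ without a separate argument.
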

The Lipschitz condition is a consequence of the dual space of $\mathcal{M}_1(M)$ being the space of Lipschitz functions $\lip{M}$ \cite{lipschalg}.
\begin{proof}
It is well known that $\underline{W}_\mu$ is convex and lsc (e.g. via the dual formulation, \cref{eq:gendual}). Thus, by convex analysis, $\underline{W}_\mu=\underline{W}_\mu^{\ast\ast}$, giving us:
\begin{equation*}
    \underline{W}_\mu(\nu)=\sup_{f\in\lip{M}}\left\{\int f\,d\nu -\underline{W}_\mu^\ast(f)\right\}.
\end{equation*}
But Kantorovich duality (when $\mu$ has compact support) also gives us
\begin{equation*}
    \underline{W}_\mu(\nu)=\sup_{f\in\lip{M}}\left\{\int f\,d\nu +\int \widetilde{f}\,d\mu\right\}=F_\mu^\ast(\nu),
\end{equation*}
where $F_\mu:f\mapsto -\int \widetilde{f} \,d\mu=\int (-f)^\ast \,d\hat{\mu}(v)$ (recall $d\hat{\mu}(v)=d\mu(-v)$). To show that $\underline{W}_\mu^\ast(f)=F_\mu(f)$ it suffices to show that $f\mapsto F_\mu(f)$ is convex and lower semicontinuous, in which case $\underline{W}_\mu^\ast(\nu)=F_\mu^{\ast\ast}(f)=F_\mu(f)$. We will do this by showing $g\mapsto g^\ast(v)$ is a convex mapping for all $v\in M^\ast$: Fix $\lambda\in[0,1]$ and $v\in M^\ast$ arbitrary.
\begin{equation*}
\begin{split}
    (\lambda g_1+(1-\lambda)g_2)^\ast (v)=&\sup_x \{\langle x,v\rangle -\lambda g_1(x)-(1-\lambda)g_2(x)\}\\
    \overset{(1)}{\le} &\sup_x \{\langle x,v\rangle -\lambda \bracket{\langle x,v\rangle -g^\ast_1(v)}-(1-\lambda)\bracket{\langle x,v\rangle -g^\ast_2(v)}\}\\
    =& \lambda g_1^\ast(v)+(1-\lambda)g_2^\ast(v),
    %g^\ast(v)\le&\langle x,v\rangle -g(x)\\
    %g(x)\le& \langle x,v\rangle-g^\ast (v)
\end{split}
\end{equation*}
where we use $g(x)\ge \langle x,v'\rangle-g^\ast (v')$ for all $v'\in M^\ast$ in (1)---taking the liberty to let $v'=v$.\par 
To show $F_\mu$ is weakly lsc, consider $f_n$ that converge pointwise to $f$ (a weaker condition than weak convergence). Then
\begin{equation*}\begin{split}
    \liminf_{n\rightarrow\infty}  F_\mu(f_n)=&\liminf_{n\rightarrow\infty} \int (-f_n)^\ast\,d\hat{\mu}\overset{\text{(F)}}{\ge}\int \liminf_{n\rightarrow\infty} \sup_x \{\langle v,x\rangle +f_n(x)\}\,d\hat{\mu}\\
    \ge & \int \sup_x \{\langle v,x\rangle +\liminf_{n\rightarrow\infty} f_n(x)\}\,d\hat{\mu}=\int (-f)^\ast\,d\hat{\mu}= F_\mu(f),
\end{split}
\end{equation*}
where we use $\{(-f_n)^\ast\}$ being bounded below by $\inf_n\{f_n(0)\}>-\infty$ to apply Fatou's lemma (F) and use $\sup g\ge g$ to get to the second line.\end{proof}

\begin{remark}
Note that we can limit Kantorovich duality to Lipschitz functions only because $\mu$ is compact, hence the optimal $f$ has bounded gradient. We could extend this result to non-compactly supported $\mu$ if we consider $\alpha$-H\"older continuous functions for any $\alpha>1$ rather than Lipschitz functions. This case is precluded from us here because of the constraint to functions with bounded derivatives in the following proposition.

Since the double dual of $\underline{W}_\mu$ is equivalent to the dual formulation, Brenier's theorem applies. And the maximizing $f$ provides the optimal transport $\nabla f$ of the primal problem.
\end{remark}
We now state a similar result regarding the dynamic cost:
\begin{prop}\label{prop:HJB}
Assume (A) and let $f\in \dbound$. Then, the unique solution to \cref{eq:HJB} is given by: 
\begin{equation}\tag{HJB'}\label{eq:dynprog}
    \phi^f(t,x)=\sup_{X\in\mathcal{A}}\left\{\expcond{f(X(T))-\int_t^T L(s,X(s),\beta_X(s,X))\,ds}{X(t)=x}\right\}.
\end{equation}
Moreover, there exists an optimal process $X$ with drift $\beta_X(t,X)=\argmin_v\{v\cdot \nabla \phi(t,x)+L(t,x,v)\}$.
\end{prop}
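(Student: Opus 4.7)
My plan is to establish the claim by a standard verification-theorem argument based on It\^o's formula, coupled with the Legendre--Fenchel duality between $L$ and $H$. The key identity is that, by assumption (A3) together with (A0), the Cauchy problem for \eqref{eq:HJB} with terminal data $f\in\dbound$ admits a classical solution $\phi=\phi^f$ that is $C^{1,2}$ in $(t,x)$ with derivatives of at most polynomial growth; this is the content invoked from \cite[p.~210, Remark 11.2]{HJB}. With such regularity in hand, the rest is reasonably mechanical, and my task is to show that this classical solution coincides with the dynamic-programming value \eqref{eq:dynprog} and that the supremum is attained by the feedback drift $\beta^\ast(t,x):=\nabla_p H(t,x,\nabla\phi(t,x))$.

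For the inequality $\phi^f(t,x)\ge \sup_X\expcond{\cdots}{X(t)=x}$, I would fix any $X\in\mathcal{A}$ with $X(t)=x$ and apply It\^o's formula to $s\mapsto \phi(s,X(s))$, yielding
\begin{equation*}
\expect{\phi(T,X(T))}-\phi(t,x)=\expect{\int_t^T\left(\pderiv{\phi}{s}+\tfrac{1}{2}\Delta\phi+\nabla\phi\cdot\beta_X\right)ds}.
\end{equation*}
The Fenchel inequality $\langle\nabla\phi,v\rangle-L(s,X,v)\le H(s,X,\nabla\phi)$ applied with $v=\beta_X(s,X)$ and the HJB equation $\partial_s\phi+\tfrac12\Delta\phi=-H(s,x,\nabla\phi)$ combine to give
\begin{equation*}
\pderiv{\phi}{s}+\tfrac{1}{2}\Delta\phi+\nabla\phi\cdot\beta_X\le L(s,X,\beta_X).
\end{equation*}
Substituting into the It\^o identity and using $\phi(T,\cdot)=f$ yields $\phi(t,x)\ge\expcond{f(X(T))-\int_t^T L(s,X,\beta_X)\,ds}{X(t)=x}$, and taking the supremum over admissible $X$ gives one direction. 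The martingale-term vanishes in expectation thanks to the polynomial growth bounds on $\nabla\phi$ together with the integrability of $X$ guaranteed by the definition of $\mathcal{A}$ (truncation-plus-dominated-convergence if needed).

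For the reverse inequality, I would construct the candidate optimal process $X^\ast$ as a weak solution of
\begin{equation*}
dX^\ast=\nabla_p H(s,X^\ast,\nabla\phi(s,X^\ast))\,ds+dW_s,\qquad X^\ast(t)=x.
\end{equation*}
Strict convexity of $L$ in $v$ (assumption (A0)) makes $\nabla_p H$ well-defined and locally Lipschitz, so the SDE admits a solution on $[t,T]$; the polynomial growth of $\nabla\phi$ together with the coercivity (A1) prevents explosion and ensures $X^\ast\in\mathcal{A}$ with $\beta_{X^\ast}=\beta^\ast(s,X^\ast)$ satisfying $\int_t^T|\beta^\ast(s,X^\ast)|^2\,ds<\infty$ a.s., exactly as in the remark following (A4,ii). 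With this choice the Fenchel inequality above becomes an equality, so It\^o's formula yields equality throughout and $\phi(t,x)=\expect{f(X^\ast(T))-\int_t^T L(s,X^\ast,\beta^\ast)\,ds}$, establishing both the dynamic-programming formula and the existence of an optimal drift of the asserted form.

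The main obstacle is not the verification step, which is routine, but the two regularity inputs that feed it: first, producing a classical $C^{1,2}$ solution $\phi^f$ to \eqref{eq:HJB} with derivatives controlled enough for It\^o and for the stochastic integral to be a true martingale, and second, constructing a genuine element of $\mathcal{A}$ with drift $\nabla_p H(s,\cdot,\nabla\phi)$. Both are handled by appealing to the standard theory cited above, which is precisely where assumptions (A0) and (A3) are essential: (A0) gives strict convexity and $C^3$ regularity of $L$ (hence of $H$), while (A3) provides the global linear-in-$L$ bounds on $\nabla_x L$ and the local boundedness of $\nabla_v L$ needed to apply the parabolic estimates. Uniqueness of the classical solution to \eqref{eq:HJB} is then immediate: any other classical solution would satisfy the same representation \eqref{eq:dynprog} by the verification argument, and hence coincide with $\phi^f$.
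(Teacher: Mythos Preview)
The paper does not actually supply a proof of this proposition: it is stated and immediately followed by a remark pointing to \cite[Remark IV.11.2]{HJB}, asserting that the result follows from the standard verification theorem there once the boundedness condition on $f$ is relaxed to $f\in\dbound$. Your sketch is exactly that standard verification-theorem argument (It\^o's formula plus the Fenchel inequality for one direction, construction of the feedback diffusion $dX^\ast=\nabla_pH(s,X^\ast,\nabla\phi)\,ds+dW_s$ for attainment), so your approach coincides with what the paper is implicitly invoking. Your identification of the two genuine technical inputs---existence of a $C^{1,2}$ solution with controlled growth, and well-posedness of the feedback SDE in $\mathcal{A}$---and their dependence on (A0) and (A3) matches the paper's own account of why those assumptions are needed.
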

\begin{remark}
This proposition may be shown by removing the boundedness condition in \cite[Remark IV.11.2]{HJB}. In fact it suffices to only have the first three derivatives bounded. Note that integrating \cref{eq:dynprog} over $d\nu_0$ yields the legendre transform of $\nu_T\mapsto C(\nu_0,\nu_T)$ for $f\in\dbound$. A key step in proving \cref{thm:stocbdual} is showing that it is not necessary to consider functions outside of this set.\par 
That this proposition does not necessarily hold for functions with unbounded derivatives confines us to $\mu$ with bounded support as we saw in \cref{lem:Wlegendre}.
\end{remark}
\begin{proof}[Proof of \cref{thm:stocbdual}]
For $\mu_0\in \mathcal{P}_1(M^\ast)$, define the function $\underline{B}_{\mu_0}:\mathcal{M}_1(M)\rightarrow\mathbb{R}\cup\{\infty\}$ to be
\begin{equation*}
    \underline{B}_{\mu_0}(\nu):=\begin{cases}\underline{B}(\mu_0,\nu)&\nu\in\mathcal{P}_1(M)\\
    \infty &\text{otherwise}.\end{cases}
\end{equation*}
From \cref{cor:Blsc}, we get that $\underline{B}_{\mu_0}$ is convex and lsc. Thus, by convex analysis, we have
\begin{equation}\label{eq:Bddual}
    \underline{B}_{\mu_0}(\nu)=\underline{B}_{\mu_0}^{\ast\ast}(\nu)=\sup_{f\in \lip{M}}\left\{\int f\, d\nu-\underline{B}_{\mu_0}^\ast(f)\right\}.
\end{equation}
We break this into two steps. First we show that when $f\in \dbound$ the dual is appropriate:
\begin{align}\notag
    \underline{B}_{\mu_0}^\ast(f)&\,:=\sup_{\nu_T\in \mathcal{P}_1(M)}\left\{\int f\,d\nu_T-\underline{B}(\mu_0,\nu_T)\right\}\\\label{eq:dualexp}
    &\,\overset{(\ref{eq:interpol})}{=}\sup_{\substack{\nu_T\in\mathcal{P}_1(M)\\\nu\in\mathcal{P}_1(M)}}\left\{\int f\,d\nu_T-C(\nu,\nu_T)-\underline{W}(\mu_0,\nu)\right\}\\\notag
    &\overset{(\ref{eq:dynprog})}{=}\sup_{\nu\in\mathcal{P}_1(M)}\left\{\int \phi^f(0,x)\,d\nu-\underline{W}(\mu_0,\nu)\right\}\\\notag
    &\hspace{2mm}= \underline{W}_{\mu_0}^\ast(\phi^f)=-\int \widetilde{\phi^f}\,d\mu_0.
\end{align}
Thus, plugging this into our dual formula (\ref{eq:Bddual}) and restricting our supremum to $\dbound$ gives
\begin{equation*}
    \underline{B}_{\mu_0}(\nu)=\underline{B}_{\mu_0}^{\ast\ast}(\nu)\ge\sup_{f\in \dbound}\left\{\int f\, d\nu+\int \widetilde{\phi^f}\,d\mu_0\right\}.
\end{equation*} 
To show the reverse inequality we will adapt the mollification argument set out in \cite[Proof of Theorem 2.1]{mikami}. We assume our mollifier $\eta_\epsilon(x)$ is such that $\eta_1(x)$ is a smooth function on $[-1,1]^d$ that satisfies $\int \eta_1(x)\,dx=1$ and $\int x\eta_1(x)\,dx=0$, then define $\eta_\epsilon(x)=\epsilon^{-d}\eta_1(x/\epsilon)$. Then for Lipschitz $f$, $f_\epsilon:=f\ast \eta_\epsilon$ is smooth with bounded derivatives. We can derive a bound on $\underline{B}_{\mu\ast\eta_\epsilon}^\ast(f)$ by removing the supremum in \cref{eq:dualexp} and fixing a process $X\in\mathcal{A}$. This gives us:
\begin{multline*}
    \expect{f_\epsilon(X(T))-\int_0^T L(s,X(s),\beta_X(s,X))\,ds-\langle X(0),Y\rangle}\overset{(\text{A2})}{\le}\\
    \expect{f(X(T)+H_\epsilon)-\int_0^T \frac{L(s,X(s)+H_\epsilon,\beta_X(s,X))-\Delta L(0,\epsilon)}{1+\Delta L(0,\epsilon)}\,ds-\langle X(0)+H_\epsilon,Y+H_\epsilon\rangle+\abs{H_\epsilon}^2}\le \\
    %\expect{f(X(T)+H_\epsilon)-\frac{\underline{B}_{\mu_0\ast\eta_\epsilon}(\nu_T\ast\eta_\epsilon)}{1+\Delta L(0,\epsilon)}}+T\frac{\Delta L(0,\epsilon)}{1+\Delta L(0,\epsilon)}+d\epsilon^2\le\\
    \frac{\underline{B}_{\mu_0\ast\eta_\epsilon}^\ast(f\bracket{1+\Delta L(0,\epsilon)})}{1+\Delta L(0,\epsilon)}+T\frac{\Delta L(0,\epsilon)}{1+\Delta L(0,\epsilon)}+d\epsilon^2, 
\end{multline*}
where $H_\epsilon$ is a random variable with distribution $\eta_\epsilon\,dx$ that is independent of $X(\cdot),Y$ (we use this to get the second line), thus $X(T)+H_\epsilon\sim d(\eta_\epsilon\ast \nu_T)$. The third line arises by maximizing over processes $(X(\cdot)+H_\epsilon,Y+H_\epsilon)$.
\par 
Taking the supremum over $X\in\mathcal{A}_{\mu_0}$ of the left side above, we can retrieve a bound on $\underline{B}_{\mu_0}^\ast(f_\epsilon)$. This bound allows us to say
\begin{equation*}\begin{split}
    \int f_\epsilon\,d\nu-\underline{B}_{\mu}^\ast(f_\epsilon)\ge  \int f\,d\nu_\epsilon-\frac{\underline{B}_{\mu_\epsilon}^\ast(f\bracket{1+\Delta L(0,\epsilon)})}{1+\Delta L(0,\epsilon)}+T\frac{\Delta L(0,\epsilon)}{1+\Delta L(0,\epsilon)}+d\epsilon^2,
\end{split}
\end{equation*}
where we use $\epsilon$-subscript to indicate convolution of a measure with $\eta_\epsilon$. Taking the supremum over $f\in\lip{M}$, we get the reverse inequality:
\begin{equation*}\begin{split}
    \sup_{f\in \dbound}\left\{\int f\,d\nu-\underline{B}_{\mu}^\ast(f)\right\}\ge \frac{\underline{B}(\mu_\epsilon,\nu_\epsilon)}{1+\Delta L(0,\epsilon)}+T\frac{\Delta L(0,\epsilon)}{1+\Delta L(0,\epsilon)}+d\epsilon^2\overset{\epsilon\searrow 0}{\ge}\underline{B}(\mu_0,\nu_T),
\end{split}
\end{equation*}
%by lower semi-continuity. \par 
%It remains to show that we can assume the concavity of $\phi^f$, by showing we can restrict ourselves to concave $f$. This works by solving $\phi^f$ through (\ref{eq:dynprog}): taking $X_1,X_2$ starting at $x_1,x_2$ by taking an interpolation of these two paths $X_\lambda:=\lambda X_1+(1-\lambda)X_2$ we then get $\phi^f(\lambda x_1+(1-\lambda)x_2)\ge \lambda\phi^f(x_1)+(1-\lambda)\phi^f(x_2)$ by the concavity of $f$ and joint convexity of $L$.
\end{proof}
In some sense $\nabla\phi$ is more fundamental than $\phi$, since our dual is invariant under $\phi\mapsto \phi+c$. Thus when discussing the convergence of a sequence of $\phi$, we refer to the convergence of their gradients. Notably the optimal gradient may not be bounded or smooth, hence may not be achieved within the set $\dbound$. In the subsequent corollary, we denote $\mathcal{P}_X$ the measure on $M\times [0,T]$ associated with the process $X$ and refer to solutions to (\ref{eq:HJB}) by $\phi_n^t(x):=\phi_n(t,x)$ without reference to their final condtion for convenience.
\begin{corollary}[Optimal Processes for $\underline{B}$]\label{cor:minoptX}
Suppose the assumptions on \cref{thm:stocbdual} are satisfied and $d\mu_0\ll d\lambda$. Then $(V,X(t))$ minimizes $\underline{B}(\mu_0,\nu_T)$ iff it is a solution to the SDE
\begin{align}\label{eq:optX}
    dX =& \nabla_p H(t,X, \nabla\phi(t,X))\, dt + dW_t\\\label{eq:optV} 
    V =& \nabla\bar{\phi}(X(0)),
\end{align}
where $\nabla\phi_n(t,x)\rightarrow \nabla\phi(t,x)$ $\mathcal{P}_X$-a.s. and $\nabla\phi_n(0,x)\rightarrow \nabla\bar{\phi}(x)$ $\nu_0$-a.s. for some sequence of $\phi_n$ that approach the supremum in \cref{eq:mindual}. Furthermore $\bar{\phi}$ is concave.
\end{corollary}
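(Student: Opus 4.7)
The plan is to decouple the problem via \cref{thm:interpol} and characterize each piece through a common maximizing sequence from \cref{thm:stocbdual}. By \cref{thm:interpol} there is an optimal intermediate $\nu_0\in\mathcal{P}_1(M)$ with $\lbstoc(\mu_0,\nu_T)=\underline{W}(\mu_0,\nu_0)+C(\nu_0,\nu_T)$; the remark following that theorem identifies minimizers of $\lbstoc(\mu_0,\nu_T)$ with pairs $(V,X(\cdot))$ such that $(V,X(0))$ optimally couples $\mu_0$ to $\nu_0$ for $\underline{W}$ and $X\in\mathcal{A}_{\nu_0}^{\nu_T}$ optimally transports $\nu_0$ to $\nu_T$ for $C$. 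It therefore suffices to characterize these two optimizations through a single dual sequence.

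Given a maximizing sequence $f_n$ with $-f_n\in\convex\lip{M}$ in \cref{eq:mindual}, and writing $\phi_n:=\phi^{f_n}$, I would split
\begin{equation*}
    \int f_n\,d\nu_T + \int \widetilde{\phi_n}(0,v)\,d\mu_0(v) = \Big(\int f_n\,d\nu_T - \int \phi_n(0,\cdot)\,d\nu_0\Big) + \Big(\int \phi_n(0,\cdot)\,d\nu_0 + \int \widetilde{\phi_n}(0,v)\,d\mu_0(v)\Big).
\end{equation*}
By \cref{prop:Cdualsc,prop:HJB} the first bracket is at most $C(\nu_0,\nu_T)$, and by \cref{lem:Wlegendre} the second is at most $\underline{W}(\mu_0,\nu_0)$; since the total converges to their sum, both inequalities saturate. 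Consequently $\{\phi_n\}$ is simultaneously a maximizing sequence for the dual of $C(\nu_0,\nu_T)$ and $\{\phi_n(0,\cdot)\}$ is a maximizing sequence of Kantorovich potentials for $\underline{W}(\mu_0,\nu_0)$.

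Applying Mikami and Thieullen's characterization of optimal processes for $C$ yields, along a subsequence, $\nabla\phi_n\to\nabla\phi$ $\mathcal{P}_X$-a.s.\ with the optimal $X$ satisfying $dX=\nabla_p H(t,X,\nabla\phi(t,X))\,dt+dW_t$. For the boundary coupling, each $\phi_n(0,\cdot)$ is concave and locally Lipschitz: concavity follows from the representation \cref{eq:dynprog} via the standard open-loop equivalence, since $X_x(s)=x+\int_0^s\beta\,dr+W_s$ depends affinely on $x$, making $\phi_n(0,\cdot)$ a supremum of concave functions in $x$. With $\mu_0\ll d\lambda$, the Brenier theorem applied through the concave Legendre structure of \cref{lem:Wlegendre} forces the optimal $\underline{W}$ coupling to be the graph of $\nabla\bar\phi$ for a concave $\bar\phi$; extracting a further subsequence and invoking uniform Lipschitz bounds together with Rademacher's theorem identifies $\bar\phi=\lim_n\phi_n(0,\cdot)$ and $V=\nabla\bar\phi(X(0))$ $\mu_0$-a.s. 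The converse is an It\^o computation: for any $(V,X)$ satisfying the SDE together with the boundary identity, applying It\^o's formula to $\phi_n(t,X(t))$ and passing to the limit reproduces equality in the duality bound, so $(V,X)$ is a minimizer.

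The main obstacle will be the simultaneous-subsequence extraction so that both $\nabla\phi_n\to\nabla\phi$ $\mathcal{P}_X$-a.s.\ on $(0,T]\times M$ and $\nabla\phi_n(0,\cdot)\to\nabla\bar\phi$ $\nu_0$-a.s.\ hold along the same $n$-indexing, together with verifying that passing to this limit does not lose optimality in either term. This is addressed by diagonal extraction, Mikami and Thieullen's compactness results for maximizing sequences of the $C$-dual, and the uniform Lipschitz estimates from the concavity-preservation argument above, which also give the concavity of the limit $\bar\phi$ directly.
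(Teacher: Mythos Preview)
Your decomposition via the optimal interpolant $\nu_0$ and the two-bracket split is sound, and structurally it parallels the paper's argument. The gap is your claim that each $\phi_n(0,\cdot)$ is concave. The open-loop argument you sketch would make $x\mapsto\expect{f_n(X_x(T))-\int_0^T L(s,X_x(s),\beta(s))\,ds}$ concave only if $x\mapsto -L(s,x,v)$ is concave, i.e.\ $L$ convex in $x$; this is nowhere among (A0)--(A4). With a general $x$-dependence the HJB semigroup does not preserve concavity of the terminal datum, so $\phi_n(0,\cdot)$ need not be concave even though $f_n$ is. Your downstream steps---the direct Brenier identification of the coupling through $\nabla\phi_n(0,\cdot)$, the uniform Lipschitz bounds, and the concavity of $\bar\phi$ ``directly''---all rest on this unproven claim.

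The paper never asserts concavity of $\phi_n^0$. Instead it inserts the concave hull $\widetilde{\widetilde{\phi_n^0}}$ and refines what you call the second bracket into two pointwise inequalities on the optimal pair $(V,X(0))$: (b) $\phi_n^0(X(0))-\widetilde{\widetilde{\phi_n^0}}(X(0))\le 0$ and (c) $\widetilde{\widetilde{\phi_n^0}}(X(0))+\widetilde{\phi_n^0}(V)\le\langle V,X(0)\rangle$. Together with the It\^o term (a), the three inequalities sum to the primal--dual gap; saturation of the total forces each to saturate in $L^1$ and hence a.s.\ along a subsequence. Saturation of (b) says $\phi_n^0$ agrees asymptotically with its concave hull $\nu_0$-a.e., and (c) is then the Fenchel equality for a genuinely concave function, from which Brenier yields $V=\nabla\bar\phi(X(0))$ with $\bar\phi$ concave. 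The idea you are missing is that concavity is recovered only in the limit and only on the support of $\nu_0$, not globally for each $\phi_n^0$; your argument would go through unchanged if you replaced the concavity claim by this concave-hull insertion.
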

\begin{proof}
First note that there exists optimal $(V,X)$, by \cref{thm:interpol}.\par 
We begin with the forward direction. By \cref{thm:stocbdual} $(V,X)$ is optimal iff there exists a sequence of solutions $\phi_n$ to \ref{eq:HJB} that is maximizing in \cref{eq:maxdual} such that
\begin{equation}\label{eq:converge}
    \expect{\int_0^T L(t,X,\beta_X(t,X))\,dt+\langle X(0),V\rangle}=\lim_{n\rightarrow\infty}\expect{\phi_n^T(X(T))+\widetilde{\phi}_n^0(V)}
\end{equation}
We add $0$ to the right hand side to get
\begin{equation}\label{eq:ineqs}
    \lim_{n\rightarrow\infty}\expect{\underbrace{\phi_n^T(X(T))-\phi_n^0(X(0))}_{\text{(a)}}+\underbrace{\phi_n^0(X(0))-\widetilde{\widetilde{\phi}_n^0}(X(0))}_{\text{(b)}}+\underbrace{\widetilde{\widetilde{\phi}_n^0}(X(0))+\widetilde{\phi}_n^0(V)}_{\text{(c)}}}.
\end{equation}
where $\widetilde{\widetilde{f}}$ is the concave double dual of $f$, or its concave hull. Applying Ito's formula to the first two terms, with the knowledge that they satisfy (\ref{eq:HJB}) we get
\begin{equation*}
    \expect{\phi_n^T(X(T))-\phi_n^0(X(0))}=\expect{\int_0^T\langle \beta_X,\nabla\phi_n^t(X(t))\rangle-H(t,X,\nabla\phi_n^t(X(t)))\,dt}
\end{equation*}
However, by the definition of the Hamiltonian, we have $\langle v,b\rangle - H(t,x,v)\le L(t,x,b)$, this along with results from convex analysis provides us with three inequalities on \cref{eq:ineqs}
\begin{align}
   \tag{a} \langle \beta_X,\nabla\phi_n^t(X(t))\rangle-H(t,X,\nabla\phi_n^t(X(t)))\le& L(t,X,\beta_X(t,X))\\
   \tag{b} \phi_n^0(X(0))-\widetilde{\widetilde{\phi}_n^0}(X(0))\le& 0\\
   \tag{c} \widetilde{\widetilde{\phi}_n^0}(X(0))+\widetilde{\phi}_n^0(V)\le& \langle V,X(0)\rangle.
\end{align}
These inequalities demonstrate that \cref{eq:ineqs} breaks our problem into a stochastic and a Wasserstein transport problem (in the flavour of \cref{thm:interpol}), along with a correction term to account for $\phi^0_n$ not being concave. Adding \cref{eq:converge} to the mix, allows us to obtain $L^1$ convergence in the (a,b,c) inequalities, hence a.s. convergence of a subsequence $\phi_{n_k}$. \par  
By properties on convex analysis, convergence in (b,c) is equivalent to $\phi_n^0$ converging $\nu_0$-a.s. to a concave function $\overline{\phi}$ such that $x\mapsto \nabla\overline{\phi}$ is the optimal transport plan for $\underline{W}(\nu_0,\mu_0)$ \cite{Brenier}.
\par
To obtain the optimal control for the stochastic process, one needs the uniqueness of the point $p$ achieving equality in (a). This is a consequence of the strict convexity and coercivity of $b\mapsto L(t,x,b)$ for all $t,x$. The differentiability of $L$ further ensures this value is achieved by $p=\nabla_v L(t,x,b)$. Hence (a) holds iff
\begin{equation*}
    \nabla\phi_n^t(X_t)\longrightarrow \nabla_v L(t,X,\beta_X(t,X))\qquad\mathcal{P}_X\text{-a.s.}
\end{equation*}
Since $\phi_n^t$ are deterministic functions, this demonstrates that $X_t$ is a Markov process with drift $\beta_X$ determined by the inverse transform: $\beta_X(t,X)=\nabla_p H(t,X,\nabla\phi(t,X))$, i.e.,  \cref{eq:optX}.
\end{proof}
\begin{remark}
It is not possible to conclude from the above work that $\bar{\phi}(x)=\phi(0,x)$ since $\bar{\phi}$ is defined on a $\mathcal{P}_X$-null set. To do so, we would require a regularity result on the optimal $\phi$ in time.
\end{remark}

\subsection{Maximizing Ballistic Cost}\label{sec:Bupper}
We now turn our focus to the related stochastic cost:
\begin{equation}\label{eq:Bover}
    \overline{B}(\nu_0,\mu_T):=\sup\left\{\expect{\langle V,X(T)\rangle}-\expect{\int_0^T L(t,X,\beta_X(t,X))\,dt}\middle\rvert X\in \mathcal{A}_{\nu_0}, V\sim\mu_T\right\},
\end{equation}
which we term the \emph{maximizing ballistic cost}.
\begin{remark}
The case where $\mu_T$ is given by a dirac measure $\delta_u$ proves suggestive. Here the maximizing ballistic cost may be interpreted literally in terms of the HJB equation by \cref{prop:HJB}---with $f:x\mapsto \langle u,x\rangle$. Thus, in this particular case we can recover
\begin{equation*}
    \overline{B}(\nu_0,\delta_u)=\int \phi^f(0,x)\,d\nu_0(x),
\end{equation*}
without the aid of any duality theorem. Notably, 
\begin{equation*}
    f^\ast(v)=\sup_z\{\langle v,z\rangle - \langle u,z\rangle\}=\begin{cases}
    0& v=u\\
    \infty & \text{otherwise}.
    \end{cases}
\end{equation*}
\Cref{prop:HJB} further gives us the minimizing process, described by the SDE
\begin{equation*}
    X(t)=X(0)+\int_0^t \nabla_p H(s,X,\nabla \phi(s,X))\,ds+W_t.
\end{equation*}
In the case where $L(t,x,v)=\frac{\abs{v}^2}{2}+\ell(x,t)$, the maximizing ballistic cost may be considered as finding the optimal stochastic process with the final drift constrained a.s. to be $\beta_X(T,X)=u$. In this section we will show that these results generalize to all measures in $\mathcal{P}_1(M^\ast)$.
\end{remark}

We begin by obtaining a similar interpolation result:
\begin{theorem}[Interpolation of $\overline{B}$]\label{thm:Bointerpol}
If the Lagrangian matches assumptions (A), then
\begin{equation}\label{eq:maxinterpol}
    \overline{B}(\nu_0,\mu_T)=\sup\{\overline{W}(\nu,\mu_T)-C_{L}(\nu_0,\nu)\},
\end{equation}
where $C_{L}$ is the action corresponding to the Lagrangian $L$. Furthermore, if $\nu_0\in\mathcal{P}_1(M)$, and $\mu_T\in\mathcal{P}_1(M^\ast)$ there exist an optimal interpolant $\nu_T$. 
\end{theorem}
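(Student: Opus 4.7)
The plan is to mirror the proof of \cref{thm:interpol} (interpolation of $\underline{B}$), with sign reversals appropriate to a maximization and with the pairing term moved from the initial to the terminal time.

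\textbf{Easy inequality.} For any admissible pair $(V,X)$ with $V\sim\mu_T$ and $X\in\mathcal{A}_{\nu_0}$, let $\nu$ be the distribution of $X(T)$. Then $(X(T),V)$ is a coupling of $(\nu,\mu_T)$ while $X\in\mathcal{A}_{\nu_0}^{\nu}$, so $\expect{\langle V,X(T)\rangle}\le \overline{W}(\nu,\mu_T)$ and $\expect{\int_0^T L\,dt}\ge C(\nu_0,\nu)$. Subtracting and taking the supremum over $(V,X)$ gives $\overline{B}(\nu_0,\mu_T)\le \sup_\nu\{\overline{W}(\nu,\mu_T)-C(\nu_0,\nu)\}$.

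\textbf{Reverse inequality.} Pick $\nu_n$ with $\overline{W}(\nu_n,\mu_T)-C(\nu_0,\nu_n)$ within $\tfrac1n$ of the supremum, a near-optimal process $Z_n\in\mathcal{A}_{\nu_0}^{\nu_n}$ with expected action at most $C(\nu_0,\nu_n)+\tfrac1n$, and a near-optimal Kantorovich coupling $\gamma^n$ of $(\nu_n,\mu_T)$ for $\overline{W}$. Disintegrate $d\gamma^n(y,v)=d\kappa^n_y(v)\,d\nu_n(y)$ and let $V_n$ be drawn from $\kappa^n_{Z_n(T)}$ (conditionally independent of the driving noise of $Z_n$ given $Z_n(T)$). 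Then $V_n\sim\mu_T$ and $\expect{\langle V_n,Z_n(T)\rangle}\ge\overline{W}(\nu_n,\mu_T)-\tfrac1n$, so $\overline{B}(\nu_0,\mu_T)\ge \sup_\nu\{\overline{W}(\nu,\mu_T)-C(\nu_0,\nu)\}-\tfrac3n$. This parallels the $W_n$-construction in the proof of \cref{thm:interpol}.

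\textbf{Attainment.} Assume $\nu_0\in\mathcal{P}_1(M)$ and $\mu_T\in\mathcal{P}_1(M^\ast)$. A maximizing sequence $\nu_n$ satisfies $\overline{W}(\nu_n,\mu_T)\le\bigl(\int|v|\,d\mu_T\bigr)\int|y|\,d\nu_n$, hence
\[
    C(\nu_0,\nu_n)\le \Bigl(\int|v|\,d\mu_T\Bigr)\int|y|\,d\nu_n-\overline{B}(\nu_0,\mu_T)+\tfrac1n,
\]
which is the hypothesis of \cref{prop:Ccoerc} (up to the additive constant accommodated by the remark following it), but with the second argument varying rather than the first. The analogous coercivity in the second argument follows by an identical Jensen--Hoeffding--Fr\'echet argument after swapping the roles of $X(0)$ and $X(T)$ (Brownian motion being time-reversible). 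Hence $\{\nu_n\}$ is tight and admits a weak subsequential limit $\nu^\ast$, for which $\liminf_n C(\nu_0,\nu_n)\ge C(\nu_0,\nu^\ast)$ by the lower semicontinuity of $C$ (\cref{prop:Cdualsc}).

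\textbf{The main obstacle} is the upper semicontinuity $\limsup_n \overline{W}(\nu_n,\mu_T)\le \overline{W}(\nu^\ast,\mu_T)$: since $\langle x,y\rangle$ is unbounded, weak convergence alone does not give continuity of $\overline{W}(\cdot,\mu_T)$. The plan is to upgrade $\nu_n\to\nu^\ast$ to $W_1$-convergence via the coercivity of (A1): the bound $\int|y|^\delta\,d\nu_n\lesssim C(\nu_0,\nu_n)+O(1)$ with $\delta>1$ furnishes uniform integrability of $\{|y|\}$ under $\{\nu_n\}$. Combined with weak convergence this yields convergence in $W_1(M)$, and since $\mu_T\in\mathcal{P}_1(M^\ast)$ the functional $\nu\mapsto\overline{W}(\nu,\mu_T)$ is continuous on $(\mathcal{P}_1(M),W_1)$. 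Attainment of the supremum at $\nu^\ast$ then follows.
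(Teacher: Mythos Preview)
Your argument follows the paper's proof essentially step for step: the two inequalities are obtained exactly as in the paper (near-optimal pairs in one direction, near-optimal interpolants plus a disintegrated coupling in the other), and attainment is reduced to tightness via the coercivity of \cref{prop:Ccoerc} with the roles of initial and terminal marginals exchanged. The paper stops there, simply invoking Prokhorov; you go further and address upper semicontinuity of $\nu\mapsto\overline{W}(\nu,\mu_T)-C(\nu_0,\nu)$ along the maximizing sequence, which the paper leaves implicit.

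Two small points of care in your extra step. First, the Jensen estimate from (A1) controls $\mathbb{E}\bigl[|X(T)-X(0)|^\delta\bigr]$ rather than $\int|y|^\delta\,d\nu_n$ directly; you need the standing hypothesis $\nu_0\in\mathcal{P}_1(M)$ (uniform integrability of $|X(0)|$) to pass from one to the other and conclude uniform integrability of $|X(T)|$. Second, continuity of $\overline{W}(\cdot,\mu_T)$ with respect to $W_1$ is immediate when $\mu_T$ has compact support (then $|\langle x,y\rangle|\le R|x|$ and the Lipschitz bound is clear), but for general $\mu_T\in\mathcal{P}_1(M^\ast)$ it requires a short additional argument---e.g.\ pass to a tight subsequence of optimal couplings $\pi_n$ and use the uniform $\delta$-moment bound on the first marginal together with $\mu_T\in\mathcal{P}_1$ to get uniform integrability of $\langle x,y\rangle$ under $\pi_n$. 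Finally, the justification ``Brownian motion being time-reversible'' for the symmetric form of \cref{prop:Ccoerc} is unnecessary: that proof only uses the displacement $|X(T)-X(0)|$, which is already symmetric in the endpoints.
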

\begin{proof}
We will proceed as in our proof of the first inequality. First, consider a sequence of rvs $(X_n(\cdot),V_n)$ approximating $\overline{B}(\nu_0,\mu_T)$. Then
\begin{align*}
    \expect{\langle V_n,X_n(T)\rangle}-\expect{\int_0^T L(t,X_n,\beta_{X_n}(t,X_n))\,dt}<\overline{B}(\nu_0,\mu_T)+\frac{1}{n}
\end{align*}
but if $X_n(T)\sim\mu_n$,
\begin{align*}
    \expect{\langle V_n,X_n(T)\rangle}\le&\overline{W}(\mu_n,\mu_T)\\ 
    \expect{\int_0^T L(t,X_n,\beta_{X_n}(t,X_n))\,dt}\ge& C_{L}(\nu_0,\mu_n),
\end{align*}
showing that $\overline{B}(\nu_0,\mu_T)$ is less than its interpolations. To get the reverse inequality, take $\mu_n$ to be a sequence approaching the supremum. Then there exists a random variable $X_n(\cdot)\in\mathcal{A}_{\nu_0}^{\mu_n}$ such that
\begin{equation*}
    \expect{\int_0^T L(t,X_n,\beta_{X_n}(t,X_n))\,dt}<C(\nu_0,\mu_n)+\tfrac{1}{n}.
\end{equation*}
Since $X_n(T)\sim\mu_n$, we may also define $V_n(\omega)\sim \gamma^n_{X_n(0)(\omega)}$ to be distributed according to the disintegration of $d\gamma_x^n(y)\otimes d\mu_T(x)=d\gamma_n$ such that
\begin{equation*}
    \iint\langle y,x\rangle \, d\gamma_n(x,y)>\overline{W}(\mu_n,\mu_T)-\tfrac{1}{n}.
\end{equation*}
We may further assume the minimizing sequence $\mu_n$ is tight in the case where $\nu_0\in \mathcal{P}_1(M^\ast)$ and $\mu_T\in \mathcal{P}_1(M)$ since by \cref{prop:Ccoerc}, \begin{equation*}
    \overline{B}(\nu_0,\mu_T)-1<\overline{W}(\mu_n,\mu_T)-C_{L}(\nu_0,\mu_n)<M\int \abs{v}\,d\mu_n-C_{L}(\nu_0,\mu_n).
\end{equation*}Again, Prokhorov's theorem concludes the proof.
\end{proof}
\begin{theorem}[Dual of $\overline{B}$]\label{thm:Boverdual}
Assume the Lagrangian satisfies the assumptions (A). If $\nu_0\in\mathcal{P}_1(M)$, $\mu_T$ has compact support, and $\overline{B}(\nu_0,\mu_T)<\infty$, then 
\begin{equation}\label{eq:maxdual}
    \overline{B}(\nu_0,\mu_T)=\inf_{g\in\convex \dbound}\left\{\int_{M^*} g^\ast\,d\mu_T+\int_M\phi^{g}\,d\nu_0 \right\},
\end{equation}
where $\phi$ solves the Hamilton-Jacobi-Bellman equation
\begin{align}\label{eq:HJB2}\tag{HJB2}
    \pderiv{\phi}{t}+\frac{1}{2}\Delta \phi -H(t,x,\nabla\phi)=&0&\phi(x,T)=g(x)
\end{align}
\end{theorem}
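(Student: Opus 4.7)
The plan is to mirror the argument used in the proof of \cref{thm:stocbdual}, this time exploiting the maximizing interpolation from \cref{thm:Bointerpol}. The cleanest route is to start from that interpolation, apply Kantorovich duality to the $\overline{W}$-term and Mikami's HJB representation (\cref{prop:Cdualsc}) to the $C$-term, and then interchange a supremum and an infimum.

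The upper bound $\overline{B}(\nu_0,\mu_T) \le \inf_g G(g)$, where $G(g):=\int_{M^*} g^\ast\,d\mu_T+\int_M\phi^g\,d\nu_0$, is the easy direction. For any $g\in\convex\dbound$, Young's inequality $\langle V,X(T)\rangle \le g(X(T))+g^\ast(V)$ yields, for any admissible $V\sim\mu_T$ and $X\in\mathcal{A}_{\nu_0}$,
\begin{equation*}
    \expect{\langle V,X(T)\rangle}-\mathscr{A}(X) \le \int_{M^*} g^\ast\,d\mu_T + \Bigl(\expect{g(X(T))}-\mathscr{A}(X)\Bigr).
\end{equation*}
The bracketed term, maximized over $X\in\mathcal{A}_{\nu_0}$, equals $\int_M \phi^g(0,x)\,d\nu_0(x)$ by the stochastic dynamic-programming representation of \cref{prop:HJB} that underlies (\ref{eq:HJB2}). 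Taking the supremum over $(V,X)$ and then the infimum over $g\in\convex\dbound$ delivers the claimed upper bound.

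For the reverse inequality, substituting the Kantorovich dual $\overline{W}(\nu,\mu_T)=\inf_{g\text{ convex}}\{\int g\,d\nu + \int g^\ast\,d\mu_T\}$ into \cref{thm:Bointerpol} produces
\begin{equation*}
    \overline{B}(\nu_0,\mu_T) = \sup_\nu \inf_g \Bigl\{\int g\,d\nu - C(\nu_0,\nu) + \int g^\ast\,d\mu_T\Bigr\}.
\end{equation*}
The integrand is affine in $\nu$ and convex in $g$ (since $g\mapsto g^\ast(v)$ is convex for every $v$), so a minimax-type interchange is available. After swapping, the inner supremum $\sup_\nu\{\int g\,d\nu - C(\nu_0,\nu)\}$ is the convex conjugate of $\nu\mapsto C(\nu_0,\nu)$, which by \cref{prop:Cdualsc} equals $\int_M\phi^g(0,x)\,d\nu_0$ for $g\in\dbound$, producing $\inf_g G(g)$.

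The principal obstacle is justifying the minimax swap together with the restriction of the candidate $g$ to $\convex\dbound$. Compactness for the $\nu$-supremum follows from \cref{prop:Ccoerc}: after \cref{thm:Bointerpol} one only needs to consider $\nu$ satisfying $C(\nu_0,\nu) \le \bigl(\sup_{\mathrm{supp}\,\mu_T}\abs{v}\bigr)\int\abs{x}\,d\nu + \overline{B}(\nu_0,\mu_T)+1$, and this set is tight by the same argument used to close the proof of \cref{thm:interpol}. The restriction to smooth convex $g$ is then handled by the mollification argument from the closing paragraph of the proof of \cref{thm:stocbdual}: a convex Lipschitz potential $g$ is replaced by $g_\epsilon := g\ast \eta_\epsilon \in\convex\dbound$ (convexity is preserved by the symmetric mollifier), and (A2) controls the discrepancy $G(g_\epsilon)-G(g)$ through the factor $\Delta L(0,\epsilon)/(1+\Delta L(0,\epsilon))$, allowing the matching inequality to be recovered as $\epsilon\searrow 0$.
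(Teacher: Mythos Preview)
Your upper bound via Young's inequality and the dynamic-programming representation is correct. The substantive issue is the reverse inequality.

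The paper does \emph{not} argue by a minimax swap. Instead it first proves that $\mu_T\mapsto\overline{B}(\nu_0,\mu_T)$ is concave and upper semicontinuous on $\mathcal{P}_1(M^\ast)$ (the analogue of \cref{cor:Blsc}, via \cref{lem:convexlemma} applied to the negative of \cref{eq:maxinterpol}), and then applies Fenchel--Moreau biduality: $\overline{B}_{\nu_0}=-(-\overline{B}_{\nu_0})^{\ast\ast}$. The conjugate $(-\overline{B}_{\nu_0})^\ast(f)$ is computed from the interpolation together with a lemma identifying the conjugate of $\overline{W}_{\mu_T}$, yielding $\sup_{\nu}\bigl\{\int(-f)^\ast\,d\nu-C(\nu_0,\nu)\bigr\}$; when $g:=(-f)^\ast\in\dbound$ this is $\int\phi^g\,d\nu_0$. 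Restriction to convex $-f$ comes from $(-f)^{\ast\ast}\le -f$, and the passage to $\convex\dbound$ uses mollification together with a Jensen estimate $g_\epsilon^\ast\le g^\ast$ that is specific to this direction and not present in the proof of \cref{thm:stocbdual}.

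Your minimax route can be made to work, but as written it has a gap. First, the functional is \emph{not} affine in $\nu$: the term $-C(\nu_0,\nu)$ is only concave. Second, and more seriously, you invoke ``a minimax-type interchange'' without naming a theorem or checking its hypotheses. Sion-type results require $\nu\mapsto\int g\,d\nu-C(\nu_0,\nu)$ to be upper semicontinuous for the topology in which your tight set is compact; under the weak topology supplied by Prokhorov, $\nu\mapsto\int g\,d\nu$ is only \emph{lower} semicontinuous for unbounded $g$ (and any non-constant convex $g$ is unbounded). One could try to repair this by working in the $\mathcal{M}_1$ duality with Lipschitz $g$ (permissible since $\mu_T$ has compact support), but then tightness alone no longer gives compactness. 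The paper's biduality argument packages all of these issues into the single statement that $\overline{B}_{\nu_0}$ is concave and usc --- a fact you never establish --- and thereby avoids having to locate a minimax theorem whose hypotheses actually fit.
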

\begin{lemma}
Let $\mu\in\mathcal{P}_1(M^\ast)$ have compact support. If we define $\overline{W}_\mu(\nu):\nu\mapsto \overline{W}(\mu,\nu)$, then for all $f\in \lip{M^\ast}$
\begin{equation*}
    \sup_{\nu\in\mathcal{P}_1(M)}\left\{\int_M f\,d\nu+\overline{W}(\mu,\nu)\right\}=\int (-f)^\ast\,d\mu.
\end{equation*}
\end{lemma}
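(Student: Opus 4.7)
The plan is to reduce this statement directly to the previously established \cref{lem:Wlegendre} by exploiting the reflection identity between the minimizing and maximizing Wasserstein costs. Recall from the introduction that $\underline{W}(\mu,\nu) = -\overline{W}(\hat{\mu},\nu)$ where $\hat{\mu}(A) := \mu(-A)$; equivalently $\overline{W}(\mu,\nu) = -\underline{W}(\hat{\mu},\nu)$. This reduction is clean because compact support of $\mu$ automatically transfers to $\hat{\mu}$, so the hypotheses of \cref{lem:Wlegendre} are met.

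First I would rewrite the supremum on the left-hand side using the reflection identity, obtaining
\begin{equation*}
    \sup_{\nu\in\mathcal{P}_1(M)}\left\{\int_M f\,d\nu+\overline{W}(\mu,\nu)\right\} = \sup_{\nu\in\mathcal{P}_1(M)}\left\{\int_M f\,d\nu-\underline{W}(\hat{\mu},\nu)\right\} = \underline{W}_{\hat{\mu}}^{\ast}(f).
\end{equation*}
Next, applying \cref{lem:Wlegendre} to the compactly supported measure $\hat{\mu}$, the right-hand side equals $-\int \widetilde{f}\,d\hat{\mu}$. Finally, a change of variable $v\mapsto -v$ yields $\int \widetilde{f}\,d\hat{\mu} = \int \widetilde{f}(-v)\,d\mu(v)$, and invoking the identity $\widetilde{f}(v)=-(-f)^{\ast}(-v)$ recorded in the notation section gives $\widetilde{f}(-v) = -(-f)^{\ast}(v)$. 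Substituting back produces the desired equality $\int (-f)^{\ast}\,d\mu$.

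The main obstacle here is essentially absent: the proof is a three-line chain of reductions once the reflection identity is in place, and no new analytic content beyond \cref{lem:Wlegendre} is needed. The only points worth being careful about are (i) verifying that $\hat{\mu}$ inherits the compact support and first-moment conditions from $\mu$ so that \cref{lem:Wlegendre} genuinely applies, and (ii) tracking the signs through the Legendre computation, which is where errors would be most likely to creep in.
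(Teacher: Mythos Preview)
Your proposal is correct and follows exactly the same route as the paper: reduce to \cref{lem:Wlegendre} via the reflection identity $\overline{W}(\mu,\nu)=-\underline{W}(\hat{\mu},\nu)$, then unwind the sign relation $\widetilde{f}(-v)=-(-f)^{\ast}(v)$ under the change of variable $v\mapsto -v$. The paper's proof is the same three-line chain you describe.
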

\begin{proof}This lemma follows from applying \cref{lem:Wlegendre} to $\underline{W}(\flip{\mu},\nu)$ (recall $\flip{\mu}(A)=\mu(-A)$), and noting that
\begin{equation*}
    -\int \widetilde{f}(v)\,d\flip{\mu}(v)=-\int \widetilde{f}(-v)\,d\mu(v)=\int(-f)^\ast(v)\,d\mu(v).
\end{equation*}
\end{proof}
\begin{proof}[Proof of \cref{thm:Boverdual}]
Define $\overline{B}_{\nu_0}:\mu_T\mapsto\overline{B}(\nu_0,\mu_T)$, and note that this is a concave function (which can be seen by applying \cref{lem:convexlemma} to the negative of \cref{eq:maxinterpol}). Furthermore it is upper semi-continuous on $\mathcal{P}_1(M^\ast)$ by the same reasoning as \cref{cor:Blsc}a). Thus we have
\begin{equation}\label{eq:maxddual}
    \overline{B}_{\nu_0}(\mu_T)=-(-\overline{B}_{\nu_0})^{\ast\ast}(\mu_T)=\inf_{f\in \lip{M^\ast}}\left\{-\int_{M^*} f\,d\mu_T+(-\overline{B}_{\nu_0})^\ast(f)\right\}.
\end{equation}
Investigating the dual, we find
\begin{align}\notag
    (-\overline{B}_{\nu_0})^\ast(f)=&\sup_{\mu_T\in\mathcal{P}_1(M^\ast)} \left\{\int_{M^*} f\,d\mu_T +\overline{B}_{\nu_0}(\mu_T)\right\}\\\notag
    =&\sup_{\substack{\mu_T\in\mathcal{P}_1(M^\ast)\\\nu_T\in\mathcal{P}_1(M^\ast)}} \left\{\int_{M^*} f\,d\mu_T +\overline{W}(\nu_T,\mu_T)-C_{L}(\nu_0,\nu_T)\right\}\\ \label{eq:lipmaxdual}
    =&\sup_{\nu_T\in\mathcal{P}_1(M^\ast)} \left\{\int_{M^*} (-f)^\ast\,d\nu_T -C_{L}(\nu_0,\nu_T)\right\}.
\end{align}
Note that in the case where $(-f)^\ast\in\dbound$, this is simply $\int_M\phi^{(-f)^\ast}\,d\nu_0$, giving us
\begin{equation*}
    \overline{B}_{\nu_0}(\mu_T)\le \inf_{(-f)^\ast\in \dbound}\left\{-\int_{M^*} f\,d\mu_T+\int_M \phi^{(-f)^\ast}\,d\nu_0\right\}.
\end{equation*}
In either case, we can restrict our $f$ to be concave by noting that if we fix $g=(-f)^\ast$, then the set of corresponding $\{-f\rvert (-f)^\ast=g\}$ is minimized by the convex function $g^\ast=(-f)^{\ast\ast}\le -f$ \cite[Proposition 4.1]{ConvexEkeland}. Thus it suffices to consider $f$ convex.\par 
We now show that it is sufficient to consider this infimum over $g\in \convex\dbound$ by a similar mollification argument to \cref{thm:stocbdual} (note that the mollifying preserves convexity). Maintaining the same assumptions and notation as in our earlier argument, we first note a useful application of Jensen's inequality to the legendre dual of a mollified function:
\begin{equation*}\begin{split}
    g_\epsilon^\ast(v)=&\sup_x\left\{\langle v,x\rangle -\expect{g(x+H_\epsilon)}\right\}
    \overset{(\text{J})}{\le}\sup_x\left\{\langle v,x\rangle -g(x)\right\}= g^\ast(v).
\end{split}
\end{equation*}
Mikami \cite[Proof of Theorem 2.1]{mikami} further shows that 
\begin{equation*}
    (\ref{eq:lipmaxdual})=C^\ast_{\nu_0}(g_\epsilon)\le \frac{C^\ast_{\nu_0\ast\eta_\epsilon}((1+\Delta L(0,\epsilon))g)}{1+\Delta L(0,\epsilon)}+T\frac{\Delta L(0,\epsilon)}{1+\Delta L(0,\epsilon)}.
\end{equation*}
Putting these together we get
\begin{equation*}
    \int g^\ast_\epsilon\,d\mu_T+(-\overline{B}_{\nu_0})^\ast(g_\epsilon^\ast)\,d\nu_0\le \int g^\ast\,d\mu_T+\frac{C^\ast_{\nu_0\ast\eta_\epsilon}((1+\Delta L(0,\epsilon))g)}{1+\Delta L(0,\epsilon)}+T\frac{\Delta L(0,\epsilon)}{1+\Delta L(0,\epsilon)}.
\end{equation*}
And once we take the infimum over $g\in\convex\lip{M}$, we get
\begin{equation*}
    \inf_{g\in c\dbound}\left\{\int g^\ast\,d\mu_T+\bracket{-\overline{B}_{\nu}}^\ast(-g^\ast)\right\}\le \frac{-(-\overline{B})^{\ast\ast}_{\nu_0\ast\eta_\epsilon}(\mu_{L,\epsilon})}{1+\Delta L(0,\epsilon)}+T\frac{\Delta L(0,\epsilon)}{1+\Delta L(0,\epsilon)},
\end{equation*}
where $d\mu_{L,\epsilon}(v):=d\mu_T(\bracket{1+\Delta L(0,\epsilon)}v)$. Taking $\epsilon\searrow 0$ dominates the right side by $\overline{B}(\nu_0,\mu_T)$ (where we exploit the upper semi-continuity of $\overline{B}$), completing the reverse inequality.
\end{proof}
\begin{corollary}[Optimal Processes for $\overline{B}$]\label{cor:maxoptX}
Suppose the assumptions on \cref{thm:Boverdual} are satisfied, with $d\mu_0\ll d\lambda$. Then, $(V,X(t))$ is an optimal process iff it is a solution to the SDE
\begin{align}\label{eq:maxoptX}
    dX =& \nabla_p H(t,x,\nabla\phi(t,X))\, dt + dW_t\\\label{eq:maxoptV} 
    X(T) =& \nabla\bar{\phi}(V,T)
\end{align}
where $\lim_{n\rightarrow \infty}\phi_n(T,x)\rightarrow \bar{\phi}(x)$ $\nu_T$-a.s. and $\lim_{n\rightarrow\infty}\phi_n(t,x)=\phi(t,x)$ $\mathcal{P}_X$-a.s. for some sequence of $\phi_n$ that is minimizing the dual problem.
\end{corollary}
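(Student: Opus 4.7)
The proof follows the strategy of \cref{cor:minoptX}, with the Wasserstein constraint now appearing at the terminal time rather than the initial one. Existence of an optimal pair $(V, X)$ follows from \cref{thm:Bointerpol} together with attainment in the $\overline{W}$ and $C$ subproblems. By \cref{thm:Boverdual}, optimality of $(V, X)$ is equivalent to the existence of a minimizing sequence $g_n \in \convex\dbound$ for the dual such that
\begin{equation*}
    \expect{\langle V, X(T)\rangle -\int_0^T L(t,X,\beta_X(t,X))\,dt} = \lim_{n\to\infty}\expect{g_n^\ast(V)+ \phi_n(0,X(0))},
\end{equation*}
writing $\phi_n := \phi^{g_n}$ for the corresponding \eqref{eq:HJB2} solutions.

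First I would rewrite the defect between the two sides as a sum of manifestly nonnegative quantities. Inserting the zero $g_n(X(T)) - \phi_n(T, X(T))$ (which uses the terminal condition $\phi_n(T,\cdot) = g_n$) and regrouping yields
\begin{equation*}
    0 = \lim_{n\to\infty}\expect{\underbrace{g_n(X(T))+g_n^\ast(V)-\langle V, X(T)\rangle}_{(a)} + \underbrace{\phi_n(0,X(0))-\phi_n(T,X(T))+\int_0^T L\,dt}_{(b)}}.
\end{equation*}
Term $(a)$ is nonnegative by Young's inequality. For term $(b)$, It\^o's formula applied to $\phi_n(t, X(t))$ along the process $X$, combined with \eqref{eq:HJB2} and the vanishing expectation of the martingale part, reduces $(b)$ to $\expect{\int_0^T \bracket{L(t,X,\beta_X) + H(t,X,\nabla\phi_n) - \langle \beta_X,\nabla\phi_n\rangle}\,dt}$, which is nonnegative by the Fenchel--Young inequality $L+H \ge \langle \cdot,\cdot\rangle$.

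Since $(a)$ and $(b)$ are both nonnegative with expectations summing to zero, each converges to zero in $L^1$, so a subsequence converges almost surely. Equality in the Fenchel--Young bound, combined with strict convexity and differentiability of $v \mapsto L(t,x,v)$, yields $\beta_X(t,X) = \nabla_p H(t,X, \nabla\phi(t,X))$ with $\nabla\phi_n(t, X(t)) \to \nabla\phi(t, X(t))$ $\mathcal{P}_X$-a.s., giving the SDE \eqref{eq:maxoptX}. Equality in Young's inequality a.s., together with Brenier's theorem applied to $\overline{W}(\nu_T, \mu_T)$, identifies $V$ and $X(T)$ via the gradient of the convex $\nu_T$-a.s.\ limit $\bar\phi := \lim_n g_n$, giving \eqref{eq:maxoptV}. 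The reverse implication follows by reading each inequality above in the other direction: along any SDE solution with the specified boundary, all three inequalities are saturated, so the primal value meets the dual value and the pair is optimal.

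The principal obstacle, as in \cref{cor:minoptX}, is that the pointwise limit $\phi$ of the $\phi_n$ need not be smooth or even globally defined, and $\bar\phi$ exists only $\nu_T$-a.s.; consequently the SDE drift $\nabla_p H(t, x, \nabla\phi(t, x))$ must be interpreted through $\mathcal{P}_X$-a.s.\ convergence of gradients $\nabla\phi_n$ rather than as a classical pointwise evaluation. Similarly, the convexity of $\bar\phi$ required to invoke Brenier must be extracted from the convexity of the $g_n$'s, which is preserved under pointwise limits.
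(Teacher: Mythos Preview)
Your proof is correct and follows the same route as the paper's: insert $\pm\phi_n(T,X(T))$ to split the duality gap into a terminal Young-inequality term and a dynamic Fenchel--Young term via It\^o's formula, then conclude both vanish in $L^1$ and pass to an a.s.\ convergent subsequence. One caveat: your It\^o step produces $L+H-\langle\beta_X,\nabla\phi_n\rangle$ only if $\phi_n$ solves \eqref{eq:HJB} (with $+H$), not \eqref{eq:HJB2}; this is what the paper's own proof uses and what the derivation of the dual via \cref{prop:HJB} actually gives, so the \eqref{eq:HJB2} label in the theorem statement is a typo you have inherited rather than an error in your argument.
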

\begin{proof}
If $(V,X)$ is optimal, then \cref{thm:Boverdual} means there exists a sequence of solutions $\phi_n$ to \ref{eq:HJB} with convex final condition such that
\begin{equation}\label{eq:maxconverge}
    \expect{\langle X(T),V\rangle-\int_0^T L(t,X,\beta_X(t,X))\,dt}=\lim_{n\rightarrow\infty}\expect{\bracket{\phi_n^T}^\ast(V)+\phi_n^0(X(0))}
\end{equation}
We add $0$ to the right hand side to get
\begin{equation*}
    \lim_{n\rightarrow\infty}\expect{\bracket{\phi_n^T}^\ast(V)+\phi_n^T(X(T))-\phi_n^T(X(T))+\phi_n^0(X(0))}.
\end{equation*}
Applying Ito's formula to the last two terms, with the knowledge that $\phi_n$ satisfies (\ref{eq:HJB}) we get
\begin{equation*}
    \expect{-\phi_n^T(X(T))+\phi_n^0(X(0))}=\expect{\int_0^T-\langle \beta_X,\nabla\phi_n^t(X(t))\rangle+H(t,X,\nabla\phi_n^t(X(t)))\,dt}
\end{equation*}
However, by the definition of the Hamiltonian, we have $-\langle v,b\rangle + H(t,x,v)\ge -L(t,x,b)$, similarly $\phi^\ast(v)+\phi(x)\ge\langle v,x\rangle$. These inequalities allow us to separate the limit in \cref{eq:maxconverge} into two requirements: (a) $\langle \beta_X,\nabla\phi_n^t(X(t))\rangle-H(t,X,\nabla\phi_n^t(X(t)))$ must converge to $L(t,X,\beta_X(t,X))$ and (b) $\phi_n^T(X(T))+\bracket{\phi_n^T}^\ast(V)$ must converge to $\langle X(T),V\rangle$ in $L^1$ hence a subsequence $\phi_{n_k}$ exists such that this convergence is a.e.\par  
The journey from (a) to \cref{eq:maxoptX} is as in \cref{cor:minoptX}. The only difference from the earlier corollary is that we know that $\phi_n$ must converge to a convex function, so (b) implies $V=\nabla\lim_{n\rightarrow \infty}\phi_n(X(T))$.

\end{proof}
\begin{remark}
Like for the minimizing cost, it is impossible to conclude $\phi(T,x)=\bar{\phi}(x)$ without a regularity result. \textbf{If} one could make this connection,  then one could formulate \cref{eq:maxoptV} as $V=\nabla \phi(T,X(T))$, allowing us to interpret the measure $\mu_T$ as a condition on our final momentum distribution---i.e.,  our final drift $\beta(T)$ would be constrained to be distributed according to the measure $\rho$ where $\mu_T=\bracket{\nabla L(t,x,\cdot)}_\#\rho$. In the particular case where $L(t,x,v)=\frac{\abs{v}^2}{2}+\ell(x,t)$, this would impose a Neumann type boundary condition on our stochastic process: $\beta(T)\sim\mu_T$.
\end{remark}

\bibliographystyle{plainnat}
% biblio

\end{document}